\theoremstyle{plain}
\newtheorem{theo}{Theorem}[section]
\newtheorem{lemma}[theo]{Lemma}
\newtheorem{prop}[theo]{Proposition}
\newtheorem{corollary}[theo]{Corollary}
\newtheorem{remark}[theo]{Remark}
\theoremstyle{definition}
\newcommand{\F}{\mathbb{F}}
\newcommand{\Z}{\mathbb{Z}}
\newcommand{\Q}{\mathbb{Q}}
\newcommand{\NN}{\mathbb{N}}
\newcommand{\cK}{\mathcal{K}}
\newcommand{\cG}{\mathcal{G}}
\newcommand{\kk}{\kappa}
\renewcommand{\epsilon}{\varepsilon}
\newcommand{\I}[1]{\llbracket #1\rrbracket}
\newcommand{\Gal}{\mathop{\mathrm{Gal}}\nolimits}
\newcommand{\Aut}{\mathop{\mathrm{Aut}}\nolimits}
\newcommand{\ord}{\mathop{\mathrm{ord}}\nolimits}
\newcommand{\lcm}{\mathop{\mathrm{lcm}}\nolimits}
\newcommand{\Ind}{\mathop{\mathrm{Ind}}\nolimits}
\newcommand{\Core}{\mathop{\mathrm{Core}}\nolimits}
\newcommand{\ur}{\mathrm{ur}}
\newcommand{\sfr}[2]{\nicefrac{#1}{#2}}
\newcounter{enumi_saved}
\def\imod#1{\allowbreak\mkern10mu({\operator@font mod}\,\,#1)}
\begin{document}

\title{On wild extensions of a $p$-adic field }
\author{Ilaria Del~Corso}
\author{Roberto Dvornicich}
\author{Maurizio Monge}
\email{delcorso@dm.unipi.it}

\email{dvornic@dm.unipi.it}

\email{maurizio.monge@gmail.com}
\date{\today}

\keywords{$p$-adic fields, Kummer theory,
  ramification theory}
\subjclass[2010]{11S05, 11S15}

\begin{abstract}

In this paper we consider the problem of classifying the isomorphism classes of extensions of degree $p^k$ of  a $p$-adic field, restricting to the case of extensions without intermediate fields.

We establish a correspondence between the isomorphism classes of these extensions and some Kummer extensions of a suitable field $F$ containing $K$. We then describe such classes in terms of the representations of
$\Gal(F/K)$.
 
  Finally, for $k=2$ and for each possible Galois group $G$, we count the number of isomorphism classes of the extensions whose normal closure has a Galois group isomorphic to $G$. As a byproduct, we get the total number of isomorphism classes.
\end{abstract}

\maketitle
\section{Introduction}
Let $p$ be a prime number, and let $K$ be a $p$-adic field.
A natural problem is to describe the set ${\mathcal E}_K(e,f)$ of all extensions of $K$ with fixed ramification index $e$ and inertial degree $f$.
In this setting one may consider several questions, such as
%
finding enumerating formulas and, more generally, characterizing subfamilies  of ${\mathcal E}_K(e,f)$  whose fields share some property: for example, the valuation of the discriminant, the Galois group of the normal closure, or, more finely, the full ramification filtration.  
These questions are elementary in the tame case ($p\nmid e$) and definitely more complicated in the wild case ($p|e$) and have been considered, over the years, by many authors.


In 1962 Krasner
\cite{krasner1962}, by examining all possible Eisenstein polynomials, obtained  an explicit formula for
the total number of elements of ${\mathcal E}_K(e,f)$.  Later on,
Serre \cite{serre1978formule}, with a similar method, was able to refine
Krasner's result by counting the elements in ${\mathcal E}_K(e,f)$
with given discriminant for all possible discriminants; in the
same paper, he also showed that the distribution of the extensions
in ${\mathcal
  E}_K(e,f)$ according to their discriminant satisfies a ``mass
formula'' of general type.

\smallskip

%
 
In 2004, Hou and Keating \cite{hou2004enumeration} considered the problem of determining the number of isomorphism classes of fields in
${\mathcal E}_K(e,f)$; they
found general
formulas when $p^2 \nmid e$ and, under some additional assumptions
on $e$ and $f$, also when $p^2|| e$. In 2011 one of the authors  was able to deduce a formula enumerating the isomorphism classes of extensions
of a $p$-adic field $K$ with given ramification $e$ and inertia $f$, for each $e$ and $f$ \cite{monge2010}.

In the paper \cite{delcorso2007compositum}, two of the authors present a complete description of the smallest degree case for which there is wild ramification, namely the case
 of extensions of degree $p$ of a $p$-adic field $K$.  
They introduced  a suitable extension $F$ of $K$  
over which all  extensions of degree $p$ of $K$ become "special" Kummer extensions, in the sense that  they can be recognized, among all degree $p$ extensions of $F$, in terms of Galois representations (see Theorem \ref{CP2:theo1}). This method allows them to get a very detailed information on each field of the family; in particular, they derive 
the  formula for the number of the extensions of $K$ of degree $p$
and of their isomorphism classes according to their
discriminant. 
In 2010, Dalawat \cite{dalawat2010serre} revisited  the method of  \cite{delcorso2007compositum}, introducing a different language, and  extended the main result to  local fields of characteristic $p$. 

In this paper we consider the problem of classifying the isomorphism classes of the extensions of degree $p^k$ of a $p$-adic field for all $k\ge1$. 
We focus on the case of extensions with no intermediate field, since in this case we are able to adapt the same scheme of proof that was used for extensions of degree $p$ in \cite{delcorso2007compositum} and \cite{dalawat2010serre}.  An extension of Theorem \ref{CP2:theo1} and the study of the Galois representations allows us to obtain a classification of the extensions of degree $p^k$ in this case. Moreover, for $k=2$ we perform more explicit computations and we find formulas for the number of extensions whose normal closure has a given Galois group.

In the following we give an account of the methods and the results of this paper. The general case turns out to be substantially more involved then the degree $p$ case; however,  since the sequence of the main steps is the same, 
 to describe the present paper we find it useful to give first a sketch of  the proof of the case $k=1$; for details one can refer to 
\cite{delcorso2007compositum} and \cite{dalawat2010serre}.

\subsection*{The basic idea}
 An extension
$L/K$ of degree $p$ is either unramified and hence cyclic, or totally
ramified.
In the latter case, it is well known that
the Galois group of the normal closure $\tilde{L}$ of $L$ is isomorphic to $S\rtimes_\varphi H$, where $S\cong{}\Z/p\Z$ is 
the $p$-Sylow subproup and $H$ is the subgroup fixing $L$; moreover, 
the map
$\varphi\ :\ H\rightarrow\Aut(S)\cong{}\Z/p\Z^\times$ is injective and
$H$ is cyclic of order $d$ dividing $p-1$.
It is easy to see that each subgroup of index $p$ in $S\rtimes_\varphi H$ is generated by one
element of order $d$, and these elements are exactly the generators of
the conjugates of $H$. Hence $\tilde{L}$ can only be obtained
as the normal closure of extensions of degree $p$ that are in the same
isomorphism class.

Let $F$ be the compositum of all extensions of $K$ of exponent diving
$p-1$; note that $F$ contains the $p$-th roots of unity. If $L/K$ is an arbitrary extension of degree $p$, then the
composite extension $L_F=FL$ is clearly Galois over $K$. The group $\Gal (L_F/K)$ has a $p$-Sylow subgroup  $\hat{S}=\Gal(L_F/F)$ which
is normal and cyclic, and
$\hat{H}=\Gal(L_F/L)\cong{}\Gal(F/K)$ is a complement acting on
$\hat{S}$ by conjugation.
The subgroup  of $\hat{H}$ fixing $\tilde{L}$ is the maximal subgroup of $\hat{H}$ which is normal in $\Gal(L_F/K)\cong \hat{S}\rtimes\hat{H}$, so it coincides with the subgroup of $\hat{H}$ acting trivially on
$\hat{S}$. It follows that  from $L_F$ one can recover $\tilde{L}$ as the field fixed by the central
elements of $\Gal(L_F/K)$ of order prime to $p$.

A key point is that the set of fields $L_F$ obtained in this way  coincides with the set $\Omega$ of all extensions of degree $p$ of $F$ that are Galois over $K$ (see \cite[Lemma 4]{delcorso2007compositum}).  
By Kummer theory, the extensions of degree $p$ of $F$ correspond to  the subgroups of order $p$ of ${F^\times}/{(F^\times)^p}$; moreover, the elements of $\Omega$ correspond to those subgroups  that
are invariant under the action of $\Gal(F/K)$. 

Let $\rho:\,\Gal(F/K)\to{\rm Aut} ({F^\times}/{(F^\times)^p})$ be the representation associated to conjugation. Then the elements of  $\Omega$ correspond to the
irreducible representations contained in $\rho$ (which have all degree 1).
 
 Summarizing, we obtain the following theorem.

\begin{theo}
\label{CP2:theo1}
Let $K$ be a $p$-adic field, and $F$ be the compositum of all cyclic
extensions of exponent $p-1$. Then we have a natural one-to-one
correspondence between the isomorphism classes of extensions of degree
$p$ of $K$ and the irreducible subspaces of the $\Gal(F/K)$-module ${F^\times}/{(F^\times)^p}$.
\end{theo}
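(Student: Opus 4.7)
My plan is to factor the desired bijection through the set $\Omega$ of degree-$p$ extensions of $F$ that are Galois over $K$, as suggested in the introductory sketch: first, establish a bijection between isomorphism classes of degree-$p$ extensions of $K$ and elements of $\Omega$ via the assignment $L\mapsto L_F := LF$; then use Kummer theory to match $\Omega$ with the irreducible $\Gal(F/K)$-subrepresentations of $F^\times/(F^\times)^p$.

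For the first step, since $[F:K]$ is coprime to $p$ while $L/K$ has degree $p$, one has $L\cap F = K$, hence $[L_F:F]=p$; moreover $L_F = \tilde L \cdot F$ (where $\tilde L$ is the normal closure of $L/K$) is Galois over $K$ with group isomorphic to $\hat S\rtimes\hat H$, where $\hat S=\Gal(L_F/F)$ has order $p$ and $\hat H\cong\Gal(F/K)$. Since $L_F$ is Galois over $K$, it depends only on the $K$-isomorphism class of $L$, so $L\mapsto L_F$ is well-defined on isomorphism classes. For injectivity, I recover $L$ up to isomorphism from $L_F$: the degree-$p$ subfields $L'\subset L_F$ with $L'F=L_F$ correspond to complements of $\hat S$ in $\Gal(L_F/K)$, and by Schur--Zassenhaus all such complements are $\hat S$-conjugate, so all such subfields are $K$-isomorphic. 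For surjectivity, given $M\in\Omega$, Schur--Zassenhaus again produces a complement of $\Gal(M/F)$ in $\Gal(M/K)$, whose fixed field is a degree-$p$ subfield $L\subset M$ over $K$ with $LF=M$.

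For the second step, Kummer theory (applicable since $F$ contains the $p$-th roots of unity) identifies degree-$p$ extensions of $F$ with one-dimensional $\F_p$-subspaces of $F^\times/(F^\times)^p$, and an extension $M=F(\sqrt[p]{\alpha})$ is Galois over $K$ precisely when the line $\langle\alpha\rangle$ is stable under $\Gal(F/K)$. Since $\Gal(F/K)$ is abelian of exponent dividing $p-1$ and $\F_p$ contains all $(p-1)$-th roots of unity, every irreducible $\F_p[\Gal(F/K)]$-submodule of $F^\times/(F^\times)^p$ is one-dimensional, so the $\Gal(F/K)$-stable lines are exactly the irreducible $\Gal(F/K)$-subspaces. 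Composing the two correspondences yields the stated bijection.

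I expect the main subtlety to lie in the first step. Linear disjointness $L\cap F=K$ must be verified uniformly: in the totally ramified case this is immediate from coprime degrees, while in the unramified case one uses that a degree-$p$ unramified extension of $K$ is cyclic of exponent $p$ and therefore not contained in $F$ (whose Galois group over $K$ has exponent dividing $p-1$). One must also check that $\hat S$-conjugate complements cut out $K$-isomorphic subfields of $L_F$, but this follows from the Galois correspondence since such complements are $K$-conjugate. Once these standard facts are in place, the rest of the argument reduces to elementary group-theoretic bookkeeping and the formalism of Kummer theory.
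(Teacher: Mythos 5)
Your proposal is correct and follows essentially the same route as the paper's argument: you factor the correspondence through the set $\Omega$ of degree-$p$ extensions of $F$ that are Galois over $K$, recover the isomorphism class of $L$ from $L_F$ via conjugacy of complements of the normal $p$-Sylow subgroup (exactly the mechanism the paper uses in the $k=1$ case of Theorem \ref{CP2:th1}, where $(|H|,|S|)=1$, and which replaces the citation of Lemma 4 of \cite{delcorso2007compositum} in the sketch), and finish with Kummer theory plus the observation that all irreducible $\F_p[\Gal(F/K)]$-submodules are one-dimensional since $\Gal(F/K)$ is abelian of exponent dividing $p-1$. No gaps worth flagging.
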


To enumerate the isomorphism classes it remains to study the Galois module structure of ${F^\times}/{(F^\times)^p}$, and in particular its irreducible subspaces and their multiplicity.

Finally, we remark that in this case the correspondence can be made very explicit, and makes it possible to recover easily
additional invariants of the extensions.

\bigskip
To generalize the result to the higher dimensional case, the first task is to find a suitable field $F$ for which an analogue of Theorem \ref{CP2:theo1} holds.
For each degree $p^k$, with $k\ge2$, restricting to the case of fields with no intermediate extension, we are able to  prove that in fact there are many possible choices for $F$ (see Section \ref{correspondence}) .

In Section \ref{section4} we describe the $\Gal(F/K)$-module structure of $F^\times/F^{\times p}$ for any tame and split extension $F$ of $K$. 
We start with the classification of the irreducible representations of $\Gal(F/K)$ over $\F_p$, via the study of the irreducible representations over $\bar \F_p$. Next, we determine the decomposition of $F^\times/F^{\times p}$ as a $\bar\F_p[\Gal(F/K)]$-module and then we bring back this information to get the structure of $F^\times/F^{\times p}$ as a $\F_p[\Gal(F/K)]$-module. Again, the results of this section hold  for every $k$.

In Section \ref{section5} we focus on $k=2$:
for each possible Galois group $G$, we count the number of isomorphism classes of the extensions whose normal closure has a Galois group isomorphic to $G$. As a byproduct, we get the total number of isomorphism classes and the total number of extensions of degree $p^2$ without intermediate extensions. 

We remark that, in principle, the results of Section \ref{section4} would allow to perform the same classification for any value of $k$. However, the number of cases to be examined increases with $k$ and therefore we restricted our analysis to the case $k=2$. Recently, M. R. Pati \cite{patik=l}, following our method, extended the classification to the case when $k$ is any prime number.

Finally, we observe that restricting to the case of extensions without intermediate fields,  has made it possible to find a tame  extension $F$ of $K$ for which  the correspondence theorem holds; in turn this yields that the  Galois module structure of $F^\times/F^{\times p}$ has a very nice form.  
One important feature is that all representations considered are induced by representations of groups of order coprime to $p$.   Trying to adapt the same scheme of proof to the general case, one can not hope for a tame extension $F$ of $K$ playing the same role, and so one has to consider representations of groups of order divisible by $p$.

In the case $k=2$ we can further subdivide the extensions not considered here, into two cases: those with just one intermediate field and those with more than one. The last case can be easily dealt using the results of \cite{delcorso2007compositum}, since each extension is the compositum of two extensions of degree $p$.
The case  of extensions with just one intermediate field requires different methods and will be the object of a separate paper by one of the authors.

\section{Notation and preliminaries}
\label{sec:2}

Throughout the paper we shall use the following notation. For a $p$-adic field $K$ we denote by $e_K$ and $f_K$ the absolute ramification index and inertial degree of $K$, respectively, by $\pi_K$ an uniformizing element, by $\kk_K$ its residue field and we put $q_K=|\kk_K|$.

If $E/K$ is a finite extension we denote by $e(E/K)$ and $f(E/K)$ the ramification index and the inertial degree of the extension; if $E/K$ is Galois with ${\rm Gal}(E/K)=G$, then $G=G_{-1}\supseteq G_0\supseteq G_1\supseteq\ldots$ is, as usual, the lower numbering ramification filtration. In particular, $G_0$ is the inertia group  and its fixed field $E^{\ur}$ is the maximal unramified subextension of $E/K$. 

We now recall some basic facts about the structure of the Galois group of a tamely ramified
normal extension $E/K$ (see for instance \cite{iwasawa1955galois}). Let $G=\Gal(E/K)$, then $G_0=\langle\tau\rangle$ is a cyclic group and
can be canonically embedded into $\kk_E^\times$ via the
map $\sigma\mapsto\overline{\sfr{\sigma(\pi_E)}{\pi_E}}$, which is
independent of the uniformizer $\pi_E$.  Moreover,  $G/G_0$ is  isomorphic to $\Gal(\kk_E/\kk_K)$, so $G/G_0\cong\langle\phi_q\rangle$, where $q=q_K$ and $\phi_q$ is the Frobenius of the extension, 
and  it acts on $G_0$ via
$\phi_q\tau\phi_q^{-1}=\tau^q$. Since  $f=f(E/K)=|G/G_0|$,  we obtain that  $\tau^{q^f-1}=1$.

 Let $\upsilon$ be  a representative of $\phi_q$ in $G$; then $\upsilon^f=\tau^r$ for some $r$.
 Taking into account that $\tau^r$ commutes with $\upsilon$, we have that 
 the order of $\tau^r$ divides $q-1$, whence the  order of
$\tau$, $e=e(E/K)$, divides $r(q-1)$. In particular, the Galois group of a
finite tame extension is of the form
\[
   \left\langle \upsilon,\tau \mid \upsilon\tau\upsilon^{-1}=\tau^q,\ \tau^e=1,\ \upsilon^f=\tau^r\right\rangle,
\]
where $e$ divides $\left(q^f-1, r(q-1)\right)$. 

A Galois extension $E/K$ such that $\Gal(E/K)$ is a semidirect product of $G_0$ and a subgroup isomorphic to $G/G_0$ will be called a split extension.
In general, a tame extension $E/K$ need not be split; however, there always 
exists an unramified extension $M$ of $E$ such that $M/K$ 
is split.
In fact, let $M/E$ be  the unramified extension of degree
$s=\sfr{e}{(e,r)}={\rm ord}\,\tau^r$. Then $M=M^{\ur}E$
is Galois over $K$, and $M^{\ur}$ and $E$ are linearly disjoint over
$E^{\ur}$. 
Denote by $\tilde\tau$ the lifting of $\tau$ to
$\Gal(M/K)$ that fixes $M^{\ur}$, and by $\tilde\upsilon$ any lifting of $\upsilon$. We have that $\tilde\tau$ generates the inertia subgroup, and
$\tilde\upsilon$ satisfies $\tilde\upsilon^{fs}=1$. It follows that
$\Gal(M/K)$ is generated by $\tilde\upsilon,\tilde\tau$ with relations
$\tilde\upsilon\tilde\tau\tilde\upsilon^{-1}=\tilde\tau^q$, $\tilde\tau^e=\tilde\upsilon^{fs}=1$, and its
inertia subgroup is generated by $\tilde\tau$. Finally, we remark, for later use,  that  
$[M:E]=s$ is coprime to $p$ and so if the $p$-core of $\Gal(E/K)$ (i.e., the
intersection of all $p$-Sylow subgroups of $\Gal(E/K)$) is trivial, then the same is true for the $p$-core of $\Gal(M/K)$.

\section{The correspondence Theorem}
\label{correspondence}

In this section we will state and prove the key result for the classification of the extensions. It is a direct generalization of Theorem \ref{CP2:theo1}, and it describes, for any $k$, the isomorphism
classes $[E/K]$ of extensions of $E/K$ degree $p^k$ having no intermediate extensions.

Let $F$ be the compositum of all normal and tame extensions of
$K$ whose Galois group is a subgroup of $GL_k(\F_p)$ and let
$H=\Gal(F/K)$.
\begin{remark}\label{Hsplit}
{\rm
Let $P$ be a $p$-Sylow subgroup of $H$ and let $U/K$ be the maximal
unramified subextension of $F/K$. Since $F/K$ is tame, the degree
$[F:U]$ is coprime to $p$. Denote by $U_0$ the unramified extension of
$K$ such that $[U_0:K]=|P|$. Then $U_0$ is the fixed field of a normal
subgroup $A$ of $H$ of order coprime to $p$, and therefore $H$ is isomorphic to a
semidirect product $A\rtimes P$.
}
\end{remark}
The main result of this section is the following. 
\begin{theo}
  \label{CP2:th1}
 There exists a natural one-to-one correspondence between the isomorphism classes of
  extensions of degree $p^k$ ($k\ge 1$) having no intermediate extension and the irreducible
  $H$-submodules $\Xi\subset{F^\times}/{(F^\times)^p}$ of dimension $k$ of the Galois module
  ${F^\times}/{(F^\times)^p}$. The isomorphism class $[L/K]$ corresponds to $\Xi$ where $LF=F(\Xi^{\sfr{1}{p}})$,
  and $\Gal(LF/K)$ is always a split extension of $\Gal(F/K)$.
\end{theo}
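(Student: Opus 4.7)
The plan is to generalize, for arbitrary $k$, the $k=1$ scheme of \cite{delcorso2007compositum}: to each $L/K$ of degree $p^k$ without intermediate subfields I attach the composite $LF$, verify that $LF/K$ is Galois with a controlled Galois group, and extract by Kummer theory the subspace $\Xi \subseteq F^\times/(F^\times)^p$ cutting out $LF/F$. The inverse construction runs backwards through $\Xi \mapsto F(\Xi^{1/p})$, recovering $L$ as the fixed field of a suitable complement.

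The first step is a group-theoretic analysis of $\tilde G = \Gal(\tilde L/K)$. Writing $H_0 = \Gal(\tilde L/L)$, the hypothesis on $L$ makes $H_0$ a core-free maximal subgroup of index $p^k$. Since Galois groups of $p$-adic extensions are solvable, the coset action of $\tilde G$ on $\tilde G/H_0$ is a primitive solvable permutation action; the standard structure theorem for such actions yields a unique minimal normal subgroup $S$ that is elementary abelian of order exactly $p^k$ and acts regularly. Hence $\tilde G = S \rtimes H_0$ with $H_0 \hookrightarrow GL_k(\F_p)$ acting faithfully and irreducibly on $S$.

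I expect the main technical obstacle to lie in the next step: verifying that $\tilde L^S \subseteq F$. Normality and the embedding $\Gal(\tilde L^S/K) \cong H_0 \hookrightarrow GL_k(\F_p)$ are immediate, so the subtle point is tameness of $\tilde L^S/K$, which amounts to the wild inertia $P$ of $\tilde L/K$ coinciding with $S$. The inclusion $S \subseteq P$ is automatic ($P$ is a nontrivial normal $p$-subgroup and $S$ the unique minimal normal subgroup), while $P \subseteq S$ (i.e.\ $P \cap H_0 = 1$) must be extracted from the maximality of $H_0$ combined with the ramification structure of $\tilde L/K$. Once $\tilde L^S \subseteq F$ is secured, the identity $L \cdot \tilde L^S = \tilde L$ (from $H_0 \cap S = 1$) gives $\tilde L \subseteq LF$, so $LF = \tilde L F$ is Galois over $K$; since $L \cap F = K$ (any tame proper subfield of $L$ would contradict the no-intermediate hypothesis), $\hat H := \Gal(LF/L) \cong H$ is a natural complement to $\hat S := \Gal(LF/F) \cong S$, giving the split structure $\Gal(LF/K) = \hat S \rtimes \hat H$ asserted in the last clause of the theorem.

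With $LF/K$ Galois, Kummer theory (using $\mu_p \subseteq F$) matches $LF$ with an $\F_p$-subspace $\Xi \subseteq F^\times/(F^\times)^p$ of dimension $k$ satisfying $LF = F(\Xi^{1/p})$; the $\Gal(F/K)$-equivariance of the Kummer pairing transports the faithful irreducible $H_0$-action on $S$ to irreducibility of $\Xi$ as an $H$-module, with the action factoring through $H \twoheadrightarrow H_0$. For the reverse direction, any irreducible $H$-submodule $\Xi$ of dimension $k$ produces $M = F(\Xi^{1/p})$, which is Galois over $K$ by $H$-stability of $\Xi$; extracting a complement $\hat H' \leq \Gal(M/K)$ of $\Gal(M/F)$ (which exists by a Schur-Zassenhaus argument adapted to this Kummer-tame setting) and setting $L = M^{\hat H'}$, one checks that intermediate subfields of $L/K$ correspond to $\hat H'$-stable submodules of $\Gal(M/F)$, and are ruled out by irreducibility of $\Xi$. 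Finally, the two constructions descend to isomorphism classes (distinct complements are $\Gal(M/K)$-conjugate, and isomorphic extensions share a Galois closure hence the same $LF$ and the same $\Xi$) and are mutually inverse, completing the bijection.
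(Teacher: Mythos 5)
Your forward direction reaches the same structural conclusion as the paper ($\tilde G=S\rtimes \tilde H$ with $S$ elementary abelian of order $p^k$, faithful and irreducible as an $\tilde H$-module) by a genuinely different route: you invoke the structure theorem for primitive solvable permutation groups applied to the coset action on $\tilde G/H_0$, whereas the paper extracts $S$ as the last nontrivial term $G_t$ of the ramification filtration and proves faithfulness, complementation and irreducibility by hand from maximality and core-freeness of $\tilde H$. Your route is cleaner group theory but, as you note, still owes a ramification argument for $P\subseteq S$. That step is easier than you suggest: once $H_0$ acts faithfully and irreducibly on $S$, one has $O_p(H_0)=1$ (a nontrivial normal $p$-subgroup of $H_0$ would have a nonzero fixed subspace in $S$, which is $H_0$-stable, hence all of $S$, contradicting faithfulness), and since $P$ is a normal $p$-subgroup containing $S$, Dedekind's law gives $P=S(P\cap H_0)$ with $P\cap H_0\trianglelefteq H_0$ a $p$-group, so $P=S$. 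This is exactly the paper's argument that the $p$-core of $\tilde H$ is trivial, and no further input from the ramification structure is needed.

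The genuine gap is in the converse direction, at the words ``which exists by a Schur--Zassenhaus argument adapted to this Kummer-tame setting.'' Schur--Zassenhaus requires $(|S|,|H|)=1$, and this fails for $k\ge 2$: $F$ contains, for instance, the unramified extension of degree $p$ (its cyclic Galois group embeds in $GL_k(\F_p)$), so $p$ divides $|H|=|\Gal(F/K)|$. The existence of a complement to $S=\Gal(M/F)$ in $\Gal(M/K)$ and the conjugacy of all complements are therefore not coprimality statements; they are the content of the paper's Lemma \ref{CP2:coho}, which proves $H^1(H,S)=H^2(H,S)=0$ by writing $H=A\rtimes P$ with $|A|$ prime to $p$ and $H/A$ a $p$-group (Remark \ref{Hsplit}), reducing via inflation-restriction to $H^i(H/A,S^A)$, and then showing $S^A=0$: if $S^A\neq 0$, the $p$-group $H/A$ would fix a nonzero vector in it, producing a one-dimensional $H$-invariant subspace and contradicting the irreducibility of $S$ in dimension $k\ge 2$. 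Without this (or an equivalent) argument the converse does not go through: it is precisely here that irreducibility and $\dim S\ge 2$ enter, and a proof that ignores them cannot be correct, since for reducible $\Xi$ the extension $1\to S\to\mathcal G\to H\to 1$ need not split and complements need not be conjugate. The rest of your converse (intermediate fields of $L/K$ correspond to $H$-stable submodules of $S$, killed by irreducibility; descent to isomorphism classes via conjugacy of complements) matches the paper once this lemma is supplied.
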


One sees that this case is very similar to the case of extensions of degree $p$. It follows
easily that the Galois group of the normal closure of an extension $L/K$ which corresponds to $\Xi$ is isomorphic to the semidirect product of
$\Xi$ by the smallest quotient of $H$ acting on it.

The following lemma will be useful in the proof of the theorem.
\begin{lemma}
  \label{CP2:coho}
  Let $H$ be finite group, and let $A$ be a normal subgroup of $H$  such that $|A|$ is coprime to $p$ and $H/A$ is a $p$-group.  Let $S$ be a finite dimensional irreducible representation of $H$ over $\F_p$ of  dimension $\geq2$. Then any extension of $H$ by $S$ with the given action is isomorphic to the
  split extension $S\rtimes{}H$, and the complements to $S$ in $S\rtimes{}H$ are exactly
  the conjugates of $H$.
\end{lemma}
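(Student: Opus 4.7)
The plan is to translate the two conclusions into vanishing statements for low-degree cohomology groups and then kill those groups with a Hochschild--Serre argument that exploits the coprime orders of $A$ and of the $p$-torsion module $S$, together with the hypothesis $\dim S\ge 2$.

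First, I would set up the translation. Extensions of $H$ by $S$ inducing the given action are classified by $H^2(H,S)$, with the split extension corresponding to the zero class; and conjugacy classes (under $S$) of complements to $S$ in $S\rtimes H$ are classified by $H^1(H,S)$, so that conjugates of $H$ exhaust the complements precisely when $H^1(H,S)=0$. Hence both statements of the lemma reduce to showing
\[
  H^1(H,S)=H^2(H,S)=0.
\]

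Next I would apply the inflation--restriction machinery to the exact sequence $1\to A\to H\to H/A\to 1$. Because $|A|$ is coprime to $p$ and $S$ is an $\F_p$-vector space, multiplication by $|A|$ is invertible on $S$, forcing $H^i(A,S)=0$ for all $i\ge 1$. The Lyndon--Hochschild--Serre spectral sequence therefore collapses onto its $E_2^{j,0}$ edge, giving a canonical isomorphism
\[
  H^i(H,S)\cong H^i(H/A,\,S^A)\qquad(i\ge 0).
\]
So it suffices to show $S^A=0$.

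The main (and only real) point is this last vanishing, and it is where the hypothesis $\dim S\ge 2$ is used. Since $A\trianglelefteq H$, the subspace $S^A$ is stable under $H$; irreducibility of $S$ forces $S^A=0$ or $S^A=S$. If $S^A=S$, then $A$ acts trivially and $S$ becomes an irreducible $\F_p[H/A]$-module. But $H/A$ is a $p$-group, and the augmentation ideal of $\F_p[H/A]$ is nilpotent (it coincides with the Jacobson radical for a $p$-group in characteristic $p$), so the only irreducible $\F_p[H/A]$-module is the trivial one-dimensional module. This contradicts $\dim S\ge 2$, so $S^A=0$.

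I do not expect any serious obstacle; the delicate input is really just the observation that dimension $\ge 2$ rules out the trivial $A$-action, after which the coprimality of $|A|$ with $p$ handles the rest in one stroke. The only care needed is to make sure the classification of complements is stated correctly (conjugacy by $S$ versus by all of $S\rtimes H$): since $S$-conjugates already fill out all conjugates of a complement, the vanishing of $H^1$ indeed gives the precise statement that every complement is a conjugate of $H$.
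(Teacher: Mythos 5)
Your proposal is correct and follows essentially the same route as the paper: reduce both claims to $H^1(H,S)=H^2(H,S)=0$, use the coprimality of $|A|$ and $p$ to kill $H^i(A,S)$ and identify $H^i(H,S)$ with $H^i(H/A,S^A)$ via inflation--restriction, and then show $S^A=0$ using that $H/A$ is a $p$-group together with irreducibility and $\dim S\ge 2$. The only cosmetic difference is in the last step, where the paper invokes the class formula to produce a nonzero $H$-fixed vector inside a nonzero $S^A$, while you phrase the same fact as the nilpotence of the augmentation ideal of $\F_p[H/A]$; these are the same observation.
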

\begin{proof}
  This is essentially the same argument used in the proof of
  \cite[Thm.~6.1]{bartel2011brauer}. Since $|S|$ and $|A|$ are relatively prime we have
  $H^q(A,S)=0$ for all $q\geq1$ (see \cite[Cor.~1, \S2, Chap.~VIII]{serre1979local}). Consequently,
  we have \cite[Prop.~5, \S6, Chap.~VII]{serre1979local} that 
\[
   0 \rightarrow H^2(H/A, S^A) \rightarrow H^2(H, S) \rightarrow H^2(A, S) = 0,
\]
is an exact sequence, hence $H^2(H/A, S^A)\cong{}H^2(H, S)$. We shall now show that the module $S^A$ of
$A$-invariants is trivial. Suppose that this is not the case: since $H/A$ is a $p$-group, by the class formula, the subspace of elements of $S^A$  fixed by  $H/A$ is non-trivial, so there exists a $H/A$-invariant subspace of dimension 1, contradicting the irreducibility of $S$.
Therefore $S^A=0$ and $0=H^2(H/A, S^A)=H^2(H, S)$, so all the extensions of $H$ by $S$ are
split (see \cite[\S3, Chap.~VII]{serre1979local} for the interpretation of $H^2(H,S)$).

In the same way we obtain that $H^1(H,S)=0$. But $H^1(H,S)$ classifies all possible splittings
$H\rightarrow{}S\rtimes{}H$ of the canonical projection up to conjugacy (see \cite[Prop.~2.3, \S2,
Chap.~IV]{brown1982cohomology}), so the only possible complements of $S$ are the conjugates of $H$.
\end{proof}

\begin{proof}[Proof of Theorem \ref{CP2:th1}]
  We first show that the compositum $L_F=LF$ is an abelian elementary
  $p$-extension of degree $p^k$ of $F$. The degree $[L_F:F]$ is $p^k$,
  since $L$ and $F$ are linearly disjoint over $K$.  The extension
  $L/K$ is totally ramified, since otherwise it would contain an
  unramified subextension of degree $p$ and hence it would have a
  proper subextension.  Let $G=\Gal(\tilde{L}/K)$ be the Galois group
  of the normal closure $\tilde L$ of $L/K$, and let
  \[
    G = G_{-1} \supseteq G_0 \supseteq G_1 \supseteq \dots
  \]
  be its lower numbering ramification filtration. It is well-known that for every $i$ the subgroup $G_i$
  is normal in $G$, and that for $i\geq1$ the quotient
  ${G_i}/{G_{i+1}}$ is an elementary abelian $p$-group.

  Let $\tilde{H}\subseteq{}G$ be the subgroup fixing $L$.  Since $L/K$
  has no intermediate extension, $\tilde{H}$ is a maximal subgroup of
  $G$ and hence there is a unique index $t$ such that
  $G_{t+1}\subseteq{}\tilde{H}$ and  $G_t\tilde{H}=G$; observe that
  $t\geq 1$ since $L/K$ is a totally ramified wild extension.

  Moreover, since
  $\tilde{L}$ is the normal closure of $L/K$, no subgroup of
  $\tilde{H}$ is normal in $G$.  It follows that $G_{t+1}$ and $\Core_G(\tilde{H})$ (that is, the
  intersection of all conjugates
  $\bigcap_{\sigma\in{}G}\sigma{}\tilde{H}\sigma^{-1}$ of $\tilde H$) are trivial.
  The centralizer $C_{\tilde{H}}(G_t)$ 
  is trivial too, since it is contained in all the conjugates of
  $\tilde{H}$. It follows that $G_t$ is a faithful $\tilde{H}$-module. 

  Observe now that, since $G_{t+1}$ is trivial, $G_t$ is an elementary
  abelian $p$-group; from $C_{\tilde{H}}(G_t)=\{1\}$ we then obtain
  $G_t\cap{\tilde{H}}=\{1\}$ and therefore $G$ is a semidirect product
  $G_t\rtimes \tilde{H}$ and $|G_t|=p^k$. This implies that the module
  $G_t$ is irreducible: in fact, if there would exist a proper submodule
  $A$ of $G_t$, then $A\rtimes \tilde{H}$ would be a proper subgroup of $G$
  containing $\tilde{H}$, contradicting the maximality of $\tilde{H}$.

We will next show that the $p$-core $O_p(\tilde{H})$ is
trivial. Suppose $O_p(\tilde{H})\neq{}1$; since
$O_p(\tilde{H})\vartriangleleft{}\tilde{H}$, the subgroup
$G_t\rtimes{}O_p(\tilde{H})$ is normal in $G$. Now, $O_p(\tilde{H})$
is a $p$-group and therefore the set $A$ of the points of $G_t$ which
are fixed by the action of $O_p(\tilde{H})$ is a non-trivial subgroup
of $G_t$, different from $G_t$ because the action is
faithful. However, it is immediate to see that the subgroup 
$A$ is also invariant by $\tilde{H}$, contradicting the irreducibility of $G_t$.

Let $L_1$ be the subfield of $\tilde{L}$ fixed by $G_t$; we have that
$\Gal(L_1/K)\cong{}\tilde{H}$ and $L_1/K$ is a tame extension, since
the $p$-core of $\tilde{H}$ trivial. Clearly its Galois group embeds into
$\Aut(G_t)\cong GL(k,\F_p)$, so we have $L_1\subseteq{}F$ and
$\tilde{L}\subseteq{}L_F$.

By construction, $\tilde{L}/L_1$ is totally ramified and hence $\tilde
L \cap F=L_1$. It follows that $\Gal(L_F/F) \cong G_t$ and therefore
$L_F/F$ is abelian elementary of degree $p^k$.  By Kummer
theory, $L_F$ can be written as $F(\Xi^{\sfr{1}{p}})$ for some
$\Xi\subset{F^\times}/{(F^\times)^p}$. This is naturally a $H$-module; this module
is irreducible: in fact, a proper submodule of $\Gal(L_F/F)$ would fix
a proper subextension $E$ of $L_F/F$, normal over $K$; hence, $E\cap\tilde{L}$ would be a
proper subextension of $\tilde{L}/L_1$ normal over $K$, corresponding to
a proper $\tilde{H}$-submodule of 
the irreducible module $\Gal(\tilde{L}/L_1)\cong G_t$. 
\smallskip

Conversely, we show that each irreducible $\Xi$ of dimension $k$ corresponds
to one isomorphism class $[L/K]$. Let $M=F(\Xi^{\sfr{1}{p}})$, then
$S=\Gal(M/F)$ is an irreducible $H$-module by Kummer theory. Let ${\mathcal G}=\Gal(M/K)$; then  ${\mathcal G}$ is  an extension of $H$ by $S$. 

By Remark \ref{Hsplit}, if $k\ge2$ then the group $H$ satisfies the hypothesis of Lemma \ref{CP2:coho}; if $k=1$ we have $(|H|,|S|)=1$. In any case it follows that the exact sequence 
$$
1 \to S \to {\mathcal G} \to H \to 1
$$
splits, namely, there exists a complement $B$ to $S$ in ${\mathcal G}$
and ${\mathcal G} \cong S\rtimes B$; moreover, the complements
to $S$ in ${\mathcal G}$ are exactly the conjugates of $B$.

If $L$ is the fixed field of $B$, then the fixed fields of the conjugates
of $B$ form an isomorphism class of extensions of $K$ of degree
$p^k$.  The field $L$, as well as its conjugates, has no proper
subextension.  In fact, let $K\subseteq L_0\subseteq L$ and let
$C={\rm Gal}(M/L_0)$; then $B\subseteq C$ and $C\cong S_0\rtimes B$
where $S_0=S\cap C$, by the formula $|S_0|\cdot|B|=|C|$. But this
means that $S_0$ is $H$-invariant, and therefore either $S_0=\{1\}$ or
$S_0=S$.
\smallskip

It is now immediate to check that the two maps are inverse to each other. 
\end{proof}

\bigskip

Looking carefully at the proof of the theorem, one can see that it works also for many other choices of the field $F$. For instance, we have the following corollary.
\begin{corollary}\label{corteo}
Suppose that Theorem \ref{CP2:th1} holds replacing $F$ with a field ${\mathcal F}$. Then it holds also for the field $\tilde{\mathcal F}$ in the following cases:\\
 \ \ i) $[\tilde{\mathcal F}:{\mathcal F}]$ is finite and coprime to $p$;\\
 ii) $\tilde{\mathcal F}$ is the subfield of ${\mathcal F}$ fixed by the $p$-core of $\Gal({\mathcal F}/K)$.
\end{corollary}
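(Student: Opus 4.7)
The plan is to revisit the proof of Theorem~\ref{CP2:th1} step by step and verify that every ingredient remains valid when the auxiliary field is replaced by $\tilde{\mathcal F}$. Both directions of that proof---from an isomorphism class $[L/K]$ to an irreducible submodule $\Xi$, and from $\Xi$ back to $[L/K]$---rest on two ingredients: (a)~the containment $L_1\subseteq\tilde{\mathcal F}$, where $L_1\subseteq\tilde L$ is the subfield of the normal closure fixed by the wild part $G_t$, and (b)~the applicability of Lemma~\ref{CP2:coho}, for which Remark~\ref{Hsplit} supplies the semidirect decomposition $\Gal(\tilde{\mathcal F}/K)=A\rtimes P$ with $|A|$ coprime to $p$ and $P$ a $p$-Sylow. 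Once (a) and (b) are in place, the remaining pieces---total ramification of $L$, linear disjointness of $L$ and $\tilde{\mathcal F}$ over $K$, the Kummer identification $L\tilde{\mathcal F}=\tilde{\mathcal F}(\Xi^{1/p})$, the irreducibility of $\Xi$ via the maximality of $\tilde H$ in $G$, and the splitting of $\Gal(L\tilde{\mathcal F}/K)$---all transfer verbatim.

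For case (i), the containment $L_1\subseteq\mathcal F\subseteq\tilde{\mathcal F}$ is immediate by hypothesis. Since $\mathcal F/K$ is tame (this is what makes Remark~\ref{Hsplit} applicable over $\mathcal F$ in the first place) and $[\tilde{\mathcal F}:\mathcal F]$ is coprime to $p$, the multiplicativity $e(\tilde{\mathcal F}/K)=e(\tilde{\mathcal F}/\mathcal F)\cdot e(\mathcal F/K)$ yields that $\tilde{\mathcal F}/K$ is itself tame; hence Remark~\ref{Hsplit} delivers the decomposition required by Lemma~\ref{CP2:coho}, and the linear disjointness of $L$ and $\tilde{\mathcal F}$ over $K$ follows in the standard way from the total ramification of $L$ together with the tameness of $\tilde{\mathcal F}/K$.

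For case (ii), the decisive fact is that $\tilde H=\Gal(L_1/K)$ has trivial $p$-core, as established inside the proof of Theorem~\ref{CP2:th1}. The image of the normal $p$-subgroup $O_p(\Gal(\mathcal F/K))$ under the surjection $\Gal(\mathcal F/K)\twoheadrightarrow\tilde H$ is a normal $p$-subgroup of $\tilde H$, and therefore trivial; equivalently, $O_p(\Gal(\mathcal F/K))\subseteq\Gal(\mathcal F/L_1)$, so the $p$-core fixes $L_1$ and $L_1\subseteq\tilde{\mathcal F}$ as desired. Tameness of $\tilde{\mathcal F}/K$ is inherited from $\mathcal F/K$ as a subextension, so Remark~\ref{Hsplit} again supplies ingredient (b). I do not anticipate any serious obstacle beyond the bookkeeping above: the main subtlety, in each case, is the verification of (a), after which the bijection of Theorem~\ref{CP2:th1} reappears for $\tilde{\mathcal F}$ by simply re-running the original argument.
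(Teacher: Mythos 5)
Your proposal is correct and follows the paper's own (implicit) route: the paper offers no written proof of this corollary beyond the remark that one should ``look carefully at the proof'' of Theorem \ref{CP2:th1}, and your verification---isolating the containment $L_1\subseteq\tilde{\mathcal F}$ and the applicability of Lemma \ref{CP2:coho} via the splitting of Remark \ref{Hsplit} as the only two ingredients that depend on the choice of auxiliary field, and then checking each in cases (i) and (ii)---is precisely that re-examination carried out. In particular, your $p$-core argument in case (ii), using that $\tilde H=\Gal(L_1/K)$ has trivial $p$-core so that the image of $O_p(\Gal({\mathcal F}/K))$ in it is trivial and hence $L_1\subseteq\tilde{\mathcal F}$, is the intended justification.
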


\begin{remark}\label{split}
{\rm  The last corollary and the argument given at the end of Section \ref{sec:2} 
show that there exists a field $F$  for which  Theorem \ref{CP2:th1} holds with the additional properties that  $F$
is a split extension of $K$ and the $p$-core of $H=\Gal(F/K)$ is trivial.

Unless otherwise specified, in the following  $F$ will denote an extension of $K$ with the properties just mentioned.
}

\end{remark}

\section{The Galois structure of $F^\times/F^{\times p}$}
\label{section4}

In this section we will describe the $H$-module structure of $F^\times/F^{\times p}$. We start with the classification of the irreducible representations of $H$ over $\F_p$, via the study of the irreducible representations over $\bar \F_p$ (Section \ref{CP2:tamerep}); our results are quite similar to those  of \cite{chebolu2009reciprocity}. Next, we determine the decomposition of $F^\times/F^{\times p}$ as a $\F_p[H]$-module; we first obtain the decomposition of the $\bar\F_p[H]$-module obtained  from $F^\times/F^{\times p}$ by extension of scalars and then we bring back this information to get the structure of $F^\times/F^{\times p}$.

\subsection{The irreducible representations of a tame Galois group}
\label{CP2:tamerep}

We shall consider groups of type ${\mathfrak H}=T\rtimes U$, where  $T=\langle\tau\rangle$ and $U=\langle\upsilon\rangle$ are cyclic 
groups of orders $e$ and $f$, respectively,  with $(e,p)=1$ and $e\mid{}q^f-1$, and 
where $\upsilon$ acts on $T$ via the map $x\mapsto{}x^q$, 
 and $q$ is a power of $p$, that in view of our applications we will denote by $p^{f_K}$. Note that he group $H=\Gal(F/K)$ belongs to this type of groups with $T=H_0$.

 Let $V$ be an $\bar\F_p$-vector space and let $\rho\, : \, {\mathfrak H} \to GL(V)$ be a representation of ${\mathfrak H}$ irreducible over $\bar\F_p$. 
 Let $T_0={\rm ker}(\rho_{|T})$, $\bar T=\bar T(\rho)=T/T_0$; to study the irreducible representation $\rho$ we can study the irreducible representation $\bar\rho\,:\, \bar T\rtimes U\to GL(V)$ which is faithful  on $\bar T.$

Denote by $\tilde{U}$ be the kernel of $U\rightarrow\Aut(\bar T)$, {\it i.e.}, $\tilde U=C_U(\bar T)$.  
Let $t=t(\rho)=|\bar T|$, $r=r(\rho)={\rm ord}_t^\times p$ (that is, the smallest power of $p$ such that $t|p^r-1$).
Let $s=s(\rho)= {\rm ord}_t^\times q$; hence $s=r/(r,f_K)$ and $q^s$ is the smallest power of $q$ such that $t|q^s-1$. Then $\tilde U$ is generated by $\upsilon^s$, that we denote by  $\tilde\upsilon$. 

The group $\bar T\times \tilde U$ is abelian, so its $p$-core is equal
to its $p$-Sylow subgroup. With the same argument as in the proof of
Theorem \ref{CP2:th1}, we get that this subgroup acts trivially on
$V$, because otherwise the representation would have a proper
invariant subspace. It follows that the representation of $\bar
T\times \tilde U$ can be factored by the $p$-Sylow, hence it defines a
representation of an abelian group of order coprime to $p$; so there
exists a subspace of dimension $1$ invariant for $\bar T\times \tilde
U$ and therefore also for $T\rtimes \tilde U.$

Let $V_\chi$ be a 1-dimensional invariant subspace,  where 
$\chi:T\rtimes\tilde{U}\rightarrow\bar\F_p^\times$ is the character of the representation in $V_\chi$. Let
$\alpha=\chi(\tau)$ and $\beta=\chi(\tilde\upsilon)$;  then
$\upsilon^i$ maps $V_\chi$ to a space where $\tau$ acts via a
conjugate $\alpha^{q^i}$ of $\alpha$, and the orbit under $\upsilon$
generates the whole $V$. Moreover, $\alpha$  has order $t$,
because the representation is faithful on $\bar T$. Hence
for $0\leq{}i<s$ the $\upsilon^iV_\chi$ are all distinct, and $\rho$ is equal to $\Ind_{T\rtimes\tilde{U}}^{T\rtimes{}U}(V_\chi)$.

If $x\in{}V_\chi$ is a generator, then the elements
$x,\upsilon{}x,\dots,\upsilon^{s-1}x$ are a basis of $V$ on
which $\tau$ and $\upsilon$ act via the matrices:
\[
\mathcal{T}_\alpha =
\begin{pmatrix}
 \alpha & & & & \\
 & \alpha^q\! & & & \\
 & & \alpha^{q^2}\!\!\! & & \\
 & & & \ddots & \\
 & & & & \!\alpha^{q^{s-1}}\! \\     
\end{pmatrix},\qquad
\mathcal{U}_\beta =
\begin{pmatrix}
  & & & & \beta \\
  1 & & & & \\
  & 1 & & & \\
  & & \ddots & & \\
  & & & 1 & \\
\end{pmatrix}.
\]
The representations obtained in this way starting from $V_\chi$ or
$\upsilon^i{}V_\chi$ are equal for all $i$, and on $\upsilon^i{}V_\chi$
the action is described via the character $\chi^{q^i}$ (the character obtained from $\chi$ by conjugation with $\upsilon^i$).

On the other hand,  given any subgroup $T_0$ of $T$ and any character $\chi$ of $T\rtimes \tilde U$, where $\tilde U=C_U(\bar T)$ ($\bar T=T/T_0$) and  ${\rm ker}(\chi_{|T})=T_0$,  all representations of the type $\Ind_{T\rtimes\tilde{U}}^{T\rtimes{}U}(V_\chi)$
 are irreducible, 
because clearly the matrices $\mathcal{T}_\alpha$ and  $\mathcal{U}_\beta$ do not have common proper invariant subspaces.

The results just proven can be summarized in the following proposition.

\begin{prop}
\label{irrfpbar}

 Let $T_0$ be any subgroup of  $T$ and let $\bar T=T/T_0$. All irreducible representations $\rho$ of $T\rtimes{}U$ in a
  $\bar\F_p$-vector space $V$  such that ${\rm ker}(\rho_{|T})=T_0$  are obtained as
  $\Ind_{T\rtimes\tilde{U}}^{T\rtimes{}U}(V_\chi)$, where  $\tilde U=C_U(\bar T)$, $\chi$  is a 
  character of $T\rtimes\tilde{U}$ such that ${\rm ker}(\chi_{|T})=T_0$ and $V_\chi$ is the representation given by $\chi$.  
  
 Moreover, $\Ind_{T\rtimes\tilde{U}}^{T\rtimes{}U}(V_\chi)= \Ind_{T\rtimes\tilde{U}}^{T\rtimes{}U}(V_{\chi'})$  if and only if 
$\chi$ and $\chi'$ are conjugate.  

The $p$-core of $T\rtimes U$ is contained in the kernel of all irreducible representations.
\end{prop}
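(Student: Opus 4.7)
The plan is to apply Clifford theory to the normal abelian subgroup $T$ and exploit the fact that in characteristic $p$ an irreducible representation of an abelian group over $\bar\F_p$ is one-dimensional with trivial $p$-Sylow action. First I would restrict $\rho$ to $T$: by Clifford's theorem, $\rho|_T \cong \bigoplus_i V_{\chi_i}$ where the characters $\chi_i$ form a single $U$-orbit. Because raising to the $q^j$-th power is a bijection on the group $T/T_0$ of order coprime to $p$, all the $\chi_i$ share the common kernel $T_0$. The stabilizer of any fixed $\chi_1$ under the $U$-conjugation action is precisely $\tilde U = C_U(\bar T)$, since $\upsilon^j$ sends $\chi_1$ to $\chi_1^{q^j}$ and this fixes $\chi_1$ exactly when $q^j \equiv 1 \pmod{t}$.

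Next, Clifford's theorem provides $\rho \cong \Ind_{T \rtimes \tilde U}^{T \rtimes U}(W)$ for an irreducible $T \rtimes \tilde U$-module $W$ containing $V_{\chi_1}$. The crucial observation is that the image of $T \rtimes \tilde U$ in $GL(W)$ is abelian, since $\tilde U$ centralizes $\bar T$ by definition and the action of $T$ factors through $\bar T$. Every element of $p$-power order in this abelian image must then act trivially on $W$: it is unipotent over $\bar\F_p$, and in an irreducible (hence semisimple) representation of an abelian group a unipotent operator must be the identity. Hence $W$ is one-dimensional, so $W = V_\chi$ for a character $\chi$ of $T \rtimes \tilde U$ extending $\chi_1$, automatically with $\ker(\chi|_T) = T_0$.

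For the converse direction I would verify that $\Ind_{T \rtimes \tilde U}^{T \rtimes U}(V_\chi)$ is irreducible for every such $\chi$ directly from the matrices $\mathcal{T}_\alpha$ and $\mathcal{U}_\beta$ displayed in the text: the eigenvalues $\alpha, \alpha^q, \dots, \alpha^{q^{s-1}}$ of $\tau$ are distinct by the minimality of $s$, so any invariant subspace is a direct sum of one-dimensional $\tau$-eigenspaces, and $\upsilon$ cyclically permutes these eigenspaces, forcing the only invariant subspaces to be $0$ and the whole module. The same eigenvalue computation gives the isomorphism criterion: the multiset $\{\alpha^{q^i}\}$ is an invariant of the representation and determines $\chi$ up to $U$-conjugation, so $\Ind_{T \rtimes \tilde U}^{T \rtimes U}(V_\chi) \cong \Ind_{T \rtimes \tilde U}^{T \rtimes U}(V_{\chi'})$ iff $\chi$ and $\chi'$ are conjugate. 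Finally, for the $p$-core statement I would reuse the argument from the proof of Theorem \ref{CP2:th1}: the fixed subspace $V^{O_p(T \rtimes U)}$ is nonzero (a $p$-group on a nonzero $\bar\F_p$-space has fixed vectors) and $T \rtimes U$-invariant by normality, so by irreducibility it is all of $V$. The main nuance is the reduction of $W$ to dimension one, where one must carefully use both the abelian image and the unipotent character of $p$-power elements in characteristic $p$.
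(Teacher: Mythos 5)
Your overall route is sound and, despite the Clifford-theoretic packaging, rests on the same two facts as the paper's argument: that the $p$-part of the (abelian) image of $T\rtimes\tilde U$ must act trivially on an irreducible module, and that the resulting prime-to-$p$ abelian action forces a one-dimensional constituent. The paper proceeds in the opposite order --- it first produces a one-dimensional $T\rtimes\tilde U$-invariant line $V_\chi$ inside $V$ and then checks by hand that the $\upsilon$-translates $\upsilon^iV_\chi$, $0\le i<s$, are distinct and span, so that $V=\Ind_{T\rtimes\tilde{U}}^{T\rtimes{}U}(V_\chi)$ --- whereas you first get $V\cong\Ind_{T\rtimes\tilde{U}}^{T\rtimes{}U}(W)$ from Clifford and then shrink $W$ to a line. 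Both are correct; your version outsources the distinctness/spanning count to the general theorem, at the cost of having to know that the Clifford correspondence is valid over $\bar\F_p$ (it is, since $\rho|_T$ is semisimple). The irreducibility of the induced modules and the $p$-core statement are argued exactly as in the paper.

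There is, however, one step that fails as written: the claim that ``the multiset $\{\alpha^{q^i}\}$ \dots determines $\chi$ up to $U$-conjugation.'' It does not. A character $\chi$ of $T\rtimes\tilde U$ is the pair $(\alpha,\beta)$ with $\beta=\chi(\tilde\upsilon)$, and since $U$ is abelian, conjugation by $\upsilon^i$ fixes $\tilde\upsilon$; hence the conjugates of $\chi$ are exactly the $(\alpha^{q^i},\beta)$ with $\beta$ unchanged. Two characters $(\alpha,\beta)$ and $(\alpha,\beta')$ with $\beta\neq\beta'$ are therefore non-conjugate yet have identical $\tau$-spectra, so the eigenvalues of $\tau$ alone cannot prove the ``only if'' half of the criterion. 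The repair is short: either observe that $\tilde\upsilon=\upsilon^s$ acts on $\Ind_{T\rtimes\tilde{U}}^{T\rtimes{}U}(V_\chi)$ by the scalar matrix $\mathcal{U}_\beta^{\,s}=\beta\cdot\mathrm{id}$, so an isomorphism of induced modules forces $\beta=\beta'$ in addition to $\alpha'\in\{\alpha^{q^i}\}$; or, equivalently, note that the one-dimensional $T\rtimes\tilde U$-invariant subspaces of the induced module are precisely the lines $\upsilon^iV_\chi$, which carry the characters $\chi^{q^i}$, so the full conjugacy class of $\chi$ (not just its restriction to $T$) is an invariant of the representation.
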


Our next step is to show how to deduce the irreducible representations over $\F_p$ from those over $\bar\F_p.$
More generally, if $\Omega/\Omega_0$ is a Galois extension of fields, we deduce the irreducible representations of a group $G$ over $\Omega_0$, from 
the irreducible representations of $G$ over $\Omega$.

Let $\rho:G\rightarrow{}GL(V)$ be an irreducible representation of  $G$ in $V=\Omega^n$ over  $\Omega$.  The irreducible representation over $\Omega_0$ which contains $\rho$ is 
the sum of the conjugates of $\rho$ under the action $\Gal(\Omega/\Omega_0)$. In fact, a
 representation containing $\rho$, defined over $\Omega_0$, must contain all the conjugates of $\rho$ under the action of $\Gal(\Omega/\Omega_0)$ and it is easy to verify that the sum over all distinct conjugates is defined and it is irreducible over $\Omega_0$. 
It follows that the dimension of this representation is the product 
of the dimension of $\rho$ by the number of its conjugates. 

We apply this argument to the representations described in Proposition \ref{irrfpbar}:  consider $\Ind_{T\rtimes\tilde{U}}^{T\rtimes{}U}(V_\chi)$, and set
$\alpha=\chi(\tau)$ and $\beta=\chi(\tilde\upsilon)$ as above. Clearly this representation
is stabilized by $\Gal(\bar\F_p/\F_p(\alpha,\beta))$ and by the
powers of the Frobenius $\phi_q$ fixing $\beta$, since $\chi$ and $\chi^{q^i}$ correspond to the same representation.
In $\Gal(\F_p(\alpha)/\F_p)$ the group generated by $\phi_q$ is equal to
the group generated by $\phi_{p^{(r,f_K)}}$. Putting $w=[\F_p(\beta):\F_p]$, we obtain that the
smallest power of $\phi_{p^{(r,f_K)}}$ fixing $\F_p(\beta)$ is $\phi_{p^{\lcm(w,(r,f_K))}}$, which generates also the
stabilizer of the representation. Hence the number of conjugates of $\rho$ is 
$\lcm(w,(r,f_K))$. We can now prove the following proposition. 

\begin{prop}
\label{CP2:repdim}
Let $\rho\,:\,T\rtimes{}U\to GL(V)$ be an irreducible representation  over $\F_p$. Consider the restriction of the representation to $T\rtimes{}\tilde{U}$
over the algebraic closure, where, setting ${\rm ker}(\rho_{|T})=T_0$ and $\bar T=T/T_0$, $\tilde U=C_U(\bar T)$. Let 
$\chi$ be the character of an invariant
subspace of dimension $1$.  Then
\[
   \dim_{\F_p}V = \lcm(\sfr{rw}{(r,f_K)},r)
\]
where $r=\ord_t^\times(p)$, and $w=[\F_p(\beta):\F_p]$ where $\beta=\chi(\tilde\upsilon)$.
\end{prop}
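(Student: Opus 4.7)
The plan is to combine Proposition \ref{irrfpbar} with a standard Galois-descent argument: an irreducible $\F_p$-representation becomes, after extension of scalars to $\bar\F_p$, the direct sum of the distinct $\Gal(\bar\F_p/\F_p)$-conjugates of a single $\bar\F_p$-irreducible constituent. So I would begin by writing
\[
  \dim_{\F_p}V \,=\, \#\bigl\{\Gal(\bar\F_p/\F_p)\text{-conjugates of }\rho_0\bigr\}\cdot\dim_{\bar\F_p}\rho_0,
\]
where $\rho_0$ is any irreducible $\bar\F_p$-constituent of $V\otimes_{\F_p}\bar\F_p$. By Proposition \ref{irrfpbar}, I can take $\rho_0=\Ind_{T\rtimes\tilde U}^{T\rtimes U}(V_\chi)$, and since $\tilde U=C_U(\bar T)=\langle\upsilon^s\rangle$ with $s=\ord_t^\times q=r/(r,f_K)$, its dimension equals $[U:\tilde U]=r/(r,f_K)$.

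The substantive step is to determine the stabilizer of $\rho_0$ in $\Gal(\bar\F_p/\F_p)$. By the uniqueness part of Proposition \ref{irrfpbar}, $\phi_{p^m}$ stabilizes $\rho_0$ iff the conjugated character $\chi^{\phi_{p^m}}$ equals $\chi^{q^j}$ for some $j$; equivalently, $\phi_{p^m}$ fixes $\beta=\chi(\tilde\upsilon)$ and sends $\alpha=\chi(\tau)$ to some $U$-conjugate $\alpha^{q^j}$. The first condition is $w\mid m$, and the second amounts to $p^m\in\langle q\rangle\subset(\Z/t\Z)^\times$, which translates to $(r,f_K)\mid m$ using $\ord_t^\times p=r$ and $q=p^{f_K}$. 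Putting both together, the stabilizer is $\langle\phi_{p^{\lcm(w,(r,f_K))}}\rangle$, so the number of conjugates equals $\lcm(w,(r,f_K))$.

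It then remains to do the arithmetic. Setting $d=(r,f_K)$ and $s=r/d$,
\[
  \dim_{\F_p}V\,=\,s\cdot\lcm(w,d)\,=\,\lcm(sw,sd)\,=\,\lcm\bigl(rw/(r,f_K),\,r\bigr),
\]
which is the claimed formula. The conjugate-counting step is the only real point; the dimension of $\rho_0$ and the dimension of the Galois orbit have both essentially been established in the discussion preceding the statement, and the final identity is pure bookkeeping.
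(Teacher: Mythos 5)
Your proof is correct and follows essentially the same route as the paper: the paper also writes $\dim_{\F_p}V=s\cdot\lcm(w,(r,f_K))$ with $s=[U:\tilde U]=r/(r,f_K)$, having computed the stabilizer of $\Ind_{T\rtimes\tilde U}^{T\rtimes U}(V_\chi)$ in $\Gal(\bar\F_p/\F_p)$ to be $\langle\phi_{p^{\lcm(w,(r,f_K))}}\rangle$ in the discussion immediately preceding the proposition. Your version merely makes that stabilizer computation slightly more explicit; the content is identical.
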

\begin{proof}
Indeed, $\Ind_{T\rtimes\tilde{U}}^{T\rtimes{}U}(V_\chi)$ has dimension $s$ so we obtain
\[ 
   \dim_{\F_p}V = s\cdot\lcm(w,(r,f_K)),
\]
and the proposition follows because $s=\sfr{r}{(r,f_K)}$.
\end{proof}

\subsection{The structure of \texorpdfstring{${F^\times}/{(F^\times)^p}$}{[F\^{}*]\_F}}

In this section $F/K$ will be a normal tamely ramified extension, and  $H$ will denote the group $\Gal(F/K)$.
We  also assume that  $\zeta_p\in F$ and that the extension is split. Let $H_0=T=\langle\tau\rangle$, 
and let $\upsilon$ be a lifting of the
Frobenius $\phi_q$ generating a complement $U$, so that $H=T\rtimes{}U$.  Let $F'$ be the field fixed by
$U$; then the extension $F/F'$ is unramified. 
Let $\pi$ be a uniformizer of $F'$, that we choose as an $e$-th root of a uniformizer of $K$, where $e=e(F/K)$
(recall that in this case $e|q-1$). We observe that $\pi$ is also a uniformizer of $F$. 
Denote by $U_F$ the group of units of $F$ and by $U_{1,F}$ the subgroup of principal units of $F$.
It is well-known that 
\begin{equation}
\label{f*}
F^\times\cong\left(\langle\pi\rangle\times\kk_F^\times\right)\oplus U_{1,F}
\end{equation}
as $H$-modules, where as usual $U_{1,F}$ denotes the group of principal units of $F$. 

We now consider the filtration $\{ U_{i,F}\}_{i\ge1}$ of $U_{1,F}$,
where $U_{i,F}=\{u\in U_F\, |\, u\equiv 1\pmod{\pi^i}\}$. We want to  describe the structure of $U_{i,F}$ as $H$-module.

For
$i\geq1$, the action of $\tau$ and $\upsilon$ on $U_{i,F}/U_{i+1,F}$ is described by
\[
    \tau(1+\alpha\pi^i) = 1+\zeta^i\alpha\pi^i+\dots,\qquad 
    \upsilon(1+\alpha\pi^i) = 1+\alpha^q\pi^i+\dots,
\]
for each $\alpha\in{}U_F$, where $\zeta=\tau(\pi)/\pi$ is a primitive $e$-th root of $1$.

We can identify $U_{i,F}/U_{i+1,F}$ with $\kk_F$ via the map
\[
    (1+\alpha\pi^i) \mapsto \bar\alpha
\]
and this induces on $\kk_F$ the action given by
\[
    \tau(\bar\alpha) = \bar\zeta^i\bar\alpha,\qquad \upsilon(\bar\alpha) = \bar\alpha^q.
\]

\begin{prop}
\label{CP2:Uiprojective}
For $i\geq1$, let $M_i$ be the $\kk_K[H]$-module formed by $\kk_F$ as set, and with the above
action. Then $M_i$ is projective both as a $\kk_K[H]$-module and as a $\F_p[H]$-module.
\end{prop}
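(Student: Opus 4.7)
The plan is to establish projectivity of $M_i$ as a $\kk_K[H]$-module and then deduce the $\F_p[H]$ case from it. For the latter deduction, I would use the isomorphism $\kk_K[H]\cong\kk_K\otimes_{\F_p}\F_p[H]$ together with $\kk_K\cong\F_p^{f_K}$ as $\F_p$-vector spaces, which gives $\kk_K[H]\cong\F_p[H]^{f_K}$ as $\F_p[H]$-modules; hence any direct summand of a free $\kk_K[H]$-module is automatically a direct summand of a free $\F_p[H]$-module. It therefore suffices to prove $M_i$ is projective over $\kk_K[H]$.

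For this I intend to invoke Higman's relative projectivity criterion with the complement $U\subset H$: since $[H:U]=e$ is coprime to $p$ and hence invertible in $\kk_K$, a $\kk_K[H]$-module is projective if and only if its restriction to $\kk_K[U]$ is projective. Equivalently, since $|T|=e$ is coprime to $p$, every Sylow $p$-subgroup of $H$ lies in a conjugate of $U$, and projectivity over a group algebra in characteristic $p$ is detected on a Sylow $p$-subgroup. Either formulation reduces the problem to showing that $M_i|_U$ is projective over $\kk_K[U]$.

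The restriction to $U$ will be handled by the normal basis theorem. Because $F/K$ is chosen to be split and $F'=F^U$, the extension $F/F'$ is unramified with $\kk_{F'}=\kk_K$, so $U$ acts on $\kk_F$ as $\Gal(\kk_F/\kk_K)$, the action being exactly the Frobenius $\alpha\mapsto\alpha^q$ that describes $\upsilon$ on $M_i$. The normal basis theorem for the cyclic extension $\kk_F/\kk_K$ then produces an isomorphism $\kk_F\cong\kk_K[U]$ of $\kk_K[U]$-modules, so $M_i|_U$ is actually free over $\kk_K[U]$, which combined with the reduction above proves the proposition. The step I would view as most delicate, more conceptual than computational, is recognizing that the scalar $T$-action on $M_i$ by $\bar\zeta^i$, though nontrivial, contributes nothing to the projectivity question: all cohomological obstruction lives in the $p$-part of $H$, which sits inside $U$, where the action is purely by Galois.
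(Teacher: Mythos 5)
Your argument is correct, but it follows a genuinely different route from the paper's. The paper proves the stronger, explicit statement that $M=\bigoplus_{i=1}^{e}M_i$ is \emph{free} of rank one over $\kk_K[H]$: it takes a normal basis generator $\bar\eta$ of $\kk_F/\kk_K$, shows that the diagonal vector $(\bar\eta,\dots,\bar\eta)$ generates $M$ by first sweeping out each $(\bar\alpha,\dots,\bar\alpha)$ under $\kk_K[U]$ and then isolating the $k$-th coordinate with the averaging operator $\frac{1}{e}\sum_{j}\tau^j\bigl((\bar\zeta^{-jk}\,\cdot\,)\bigr)$ (which exists because $e$ is invertible in $\kk_K$), and concludes by a dimension count that $M\cong\kk_K[H]$; each $M_i$ is then projective as a direct summand of a free module, and the periodicity $M_{e+i}=M_i$ handles all $i\ge 1$. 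You instead treat each $M_i$ individually: since $[H:U]=e$ is prime to $p$, Higman's criterion (equivalently, detection of projectivity on a Sylow $p$-subgroup, which lies in a conjugate of $U$) reduces everything to $\Res_U M_i$, which is the regular Galois module $\kk_F$ over $\kk_K[U]$ and hence free by the normal basis theorem. Both proofs ultimately rest on the normal basis theorem together with the invertibility of $e$; yours is shorter and makes transparent why the scalar $T$-action by $\bar\zeta^i$ is irrelevant, at the cost of invoking standard modular representation theory, while the paper's is self-contained and delivers the extra identification $\bigoplus_{i=1}^{e}M_i\cong\kk_K[H]$ (the shape of Corollary \ref{regular}, which the paper later rederives by multiplicity counting anyway). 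Your reduction from $\kk_K[H]$-projectivity to $\F_p[H]$-projectivity via $\kk_K[H]\cong\F_p[H]^{f_K}$ is the same step the paper takes in its final sentence.
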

\begin{proof}
  Consider the sum $M=\oplus_{i=1}^{e}M_i$. We shall prove that $M$ is a free $\kk_K[H]$-module.
  Let $\bar\eta$ be normal basis generator 
  for $\kk_F$ over $\kk_K$; we claim that the vector $w=(\bar\eta)_{1\leq{}i\le e}$, with all components equal to
  $\bar\eta$, is a generator for $M$. Indeed, under the action of $\kk_K[U]$  we obtain all elements of the form
  $(\bar\alpha)_{1\leq{}i\le e}$; now the vector
  \[
     \frac{1}{e}\sum_{j=1}^{e}\tau^j\left((\bar\zeta^{-jk}\bar\alpha)_{1\leq{}i\le e}\right)
  \]
  has the  $k$-th component equal to  $\bar\alpha$ (recall that $e|q-1$), and the other
  components equal to $0$. This proves the claim, namely $M=\kk_K[H]w$. Hence $M$ is a quotient of the free module $\kk_K[H]$, and in fact $M=\kk_K[H]$ since they have the same dimension. 
 It follows that, for $1\le i\le e$, the module  $M_i$ is projective, being a direct summand of a
  free module. On the other hand, $M_{e+i}=M_i$ and thus $M_i$ is projective for all $i\ge1$. Clearly, this implies that $M_i$ is projective also as a $\F_p[H]$-module.
  \end{proof}

The proposition just proved shows that $U_{i+1,F}$ has a complement in $U_{i,F}$ as a $\F_p[H]$-module, namely, 
$U_{i,F}\cong M_i\oplus U_{i+1,F}.$ 
By induction 
\begin{equation}
\label{umi}
U_{1,F}\cong \bigoplus_{i=1}^\frac{pe_F}{p-1}M_i\oplus U_{\frac{pe_F}{p-1}+1,F}.
\end{equation}

\begin{prop}
\label{IF}
Let $I_F=\sfr{pe_F}{(p-1)}$, and let $\I{0,I_F}$ be the set of integers prime to $p$ in the interval
$[0,I_F]$.
Denote by $\F_p$ and  $M_\omega$ the $\F_p[H]$-module  $\F_p$ with the trivial action and the action given by  the cyclotomic character $\omega$, respectively. Then we have the $\F_p[H]$-module isomorphism
$$
   {F^\times}/{(F^\times)^p} \cong \F_p \oplus \big(\bigoplus_{i\in\I{0,I_F}} M_i\big) \oplus M_\omega.
$$
\end{prop}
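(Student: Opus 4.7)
The plan is to analyze the two summands of \eqref{f*}, $F^\times \cong (\langle\pi\rangle\times\kk_F^\times)\oplus U_{1,F}$, separately modulo $p$-th powers.

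For the first summand, I would first observe that $|\kk_F^\times|=p^{f_F}-1$ is prime to $p$, whence $\kk_F^\times/(\kk_F^\times)^p=0$; and that $\bar\pi$ is $H$-fixed in $F^\times/(F^\times)^p$ because $\tau(\pi)=\zeta\pi$ with $\zeta\in\mu_e$ and $(e,p)=1$ forces $\zeta\in(\kk_F^\times)^p$, while $\upsilon(\pi)=\pi$ by construction. This accounts for the trivial $\F_p$ summand.

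For the principal units, I would start from \eqref{umi}: $U_{1,F}\cong\bigoplus_{i=1}^{I_F}M_i\oplus U_{I_F+1,F}$. The tail $U_{I_F+1,F}$ lies in $(U_{1,F})^p$, because the $p$-th power map $U_{j,F}\to U_{j+e_F,F}$ is a $\Z_p[H]$-isomorphism for $j>e_F/(p-1)$ (the leading term of $(1+\alpha\pi^j)^p$ is then $p\alpha\pi^j$, of valuation $e_F+j$), and setting $j=e_F/(p-1)+1$ yields $U_{I_F+1,F}=(U_{e_F/(p-1)+1,F})^p$. Hence $U_{1,F}/(U_{1,F})^p$ is a quotient of $\bigoplus_{i=1}^{I_F}M_i$. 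Next, for $1\le j<e_F/(p-1)$, the expansion $(1+\alpha\pi^j)^p\equiv 1+\alpha^p\pi^{jp}\pmod{\pi^{jp+1}}$ shows that $p$-th powers of elements of $M_j$ project onto $M_{jp}$ via the Frobenius $\alpha\mapsto\alpha^p$, which is an $\F_p[H]$-isomorphism $M_j\xrightarrow{\sim}M_{jp}$ (Frobenius commutes with both the $\tau$-scaling $\alpha\mapsto\zeta^j\alpha$ and the $\upsilon$-raising $\alpha\mapsto\alpha^q$). So every $M_i$ with $p\mid i$ and $i<I_F$ is killed in the quotient, leaving the $M_i$ for $i\in\I{0,I_F}$ plus some cokernel at $i=I_F$.

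The main obstacle is the boundary index $i=I_F$. At $j=e_F/(p-1)$ the terms $p\alpha\pi^j$ and $\alpha^p\pi^{jp}$ both have valuation $I_F$, giving the $\F_p$-linear map $\alpha\mapsto u\alpha+\alpha^p$ from $M_{e_F/(p-1)}$ to $M_{I_F}$, where $u=\overline{p/\pi^{e_F}}\in\kk_F^\times$. Using the factorization $p=(\zeta_p-1)^{p-1}\cdot(-1+O(\zeta_p-1))$ one obtains $-u=\bar w^{p-1}$ with $\bar w=\overline{(\zeta_p-1)/\pi^{e_F/(p-1)}}\in\kk_F^\times$; since $\mu_{p-1}=\F_p^\times\subseteq\kk_F$, the kernel of the map is the $\F_p$-line $\F_p\bar w$, which is exactly the class of $\zeta_p\in U_{e_F/(p-1),F}$ in $M_{e_F/(p-1)}$. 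Thus the image has codimension $1$ in $M_{I_F}$, and the $1$-dimensional cokernel has $\tau$-action $\bar\zeta^{I_F}=\bar\zeta^{e_F/(p-1)}=\omega(\tau)$ (using $\bar\zeta^{e_F}=1$) and $\upsilon$-action $\omega(\upsilon)$, hence is canonically $M_\omega$. Combining the trivial $\F_p$ from $\pi$, the $M_i$ for $i\in\I{0,I_F}$ from the surviving filtration slots, and the $M_\omega$ from the boundary cokernel yields the claimed decomposition.
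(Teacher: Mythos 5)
Your proof is correct and follows essentially the same route as the paper's: split off the trivial $\F_p$ factor via \eqref{f*}, use \eqref{umi}, analyse the $p$-th power map level by level on the unit filtration to see that the multiples of $p$ below $I_F$ are absorbed while a codimension-one subspace $N\subset M_{I_F}$ survives, and identify the one-dimensional cokernel with $M_\omega$. The only difference is that you unpack into explicit computations the two facts the paper simply cites (the Fesenko--Vostokov description of $U_{1,F}^p$, and \cite[Prop.~7]{delcorso2007compositum} for the cyclotomic identification of $M_{I_F}/N$); your verification of the $\upsilon$-action on that cokernel is asserted rather than proved, but that is exactly the content of the cited proposition.
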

\begin{proof}
By equation \eqref{f*} we get 
$$
{F^\times}/{(F^\times)^p} \cong \left(\langle\pi\rangle\times\kk_F^\times\right)/\left(\langle\pi\rangle\times\kk_F^\times\right)^p\oplus U_{1_F}/U_{1,F}^p\cong \F_p\oplus U_{1_F}/U_{1,F}^p,$$
since clearly the action on $\left(\langle\pi\rangle\times\kk_F^\times\right)/\left(\langle\pi\rangle\times\kk_F^\times\right)^p$ is trivial.

Consider now $U_{1_F}/U_{1,F}^p$. By equation \eqref{umi}, 
$$U_{1,F}^p\cong \bigoplus_{i=1}^\frac{pe_F}{p-1}M_i^p\oplus U_{\frac{pe_F}{p-1}+1,F}^p$$
and, applying \cite[(5.7) and (5.8)]{Fesenko_Vostokov}, we get 
$$U_{1,F}^p\cong \bigoplus_{i=1}^{\frac{e_F}{p-1}-1}M_{ip}\oplus N \oplus U_{\frac{pe_F}{p-1}+1,F},$$
where $N$ s a subspace of $M_\frac{pe_F}{p-1}$ of codimension 1.

It follows that 
$$U_{1_F}/U_{1,F}^p=\bigoplus_{i\in\I{0,I_F}} M_i \oplus M_\frac{pe_F}{p-1}/N.$$ 
Finally, the submodule 
$M_\frac{pe_F}{p-1}/N$ corresponds via Kummer theory to the unramified extension of
degree $p$, and the action of $H$ on this submodule is given by the cyclotomic character (see for instance \cite[Prop.7]{delcorso2007compositum}). 
\end{proof}

By the last proposition we are left to decompose each $M_i$ (for $i\in\I{0,I_F}$), and hence $Y:=\oplus_{i\in\I{0,I_F}} M_i$,  into a sum of irreducible representations. 
We begin by studying the decomposition of  $M_i \otimes_{\F_p} \bar\F_p$; using Proposition \ref{CP2:repdim} we shall then obtain the decomposition of $M_i$.

\begin{prop}
\label{CP2:Uistruct}
Let $V_i$ be the $\bar\F_p[T]$ module of dimension $1$ where $\tau$ acts via multiplication by
$\bar\zeta^i$. Then
\[
  M_i \otimes_{\kk_K} \bar\F_p  \cong \Ind_T^H(V_i)\quad {\rm and}\quad M_i \otimes_{\F_p} \bar\F_p  \cong \left(\Ind_T^H(V_i)\right)^{f_K}\]
as $\bar\F_p[H]$-modules.
\end{prop}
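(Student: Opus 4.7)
The plan is to diagonalize the $\kk_F$-action on $M_i$ after scalar extension. On $M_i=\kk_F$ the operator $\tau=\mu_{\bar\zeta^i}$ is multiplication by $\bar\zeta^i\in\kk_F$, while $\upsilon:\bar\alpha\mapsto\bar\alpha^q$ is only $\kk_K$-linear and satisfies the semilinearity relation $\upsilon\,\mu_\gamma=\mu_{\gamma^q}\,\upsilon$ for every $\gamma\in\kk_F$. The multiplication action of $\kk_F$ commutes with $\tau$, so once we extend scalars the idempotent decomposition of the resulting action of $\kk_F$ cleanly splits both tensor products.

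For the first isomorphism, I would use $\kk_F\otimes_{\kk_K}\bar\F_p\cong\bar\F_p^{f}$ via the $f$ embeddings $\sigma_j:\kk_F\hookrightarrow\bar\F_p$ extending the identity on $\kk_K$, which splits $M_i\otimes_{\kk_K}\bar\F_p$ into one-dimensional $\bar\F_p$-lines $W_0,\ldots,W_{f-1}$. On $W_j$, the operator $\tau$ acts by the scalar $\sigma_j(\bar\zeta^i)$; the key observation is that $\bar\zeta\in\kk_K$, since $e\mid q_K-1$ by our normalization of $\pi$, so each $\sigma_j$ fixes $\bar\zeta$ and $\tau$ acts uniformly by the single scalar $\bar\zeta^i$ on every line. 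The semilinearity relation then forces $\upsilon$ to cyclically permute the $W_j$'s. Comparing directly with the matrix realization of $\Ind_T^H(V_i)$ given just before Proposition \ref{irrfpbar} (the pair $\mathcal{T}_{\bar\zeta^i}$, $\mathcal{U}_\beta$) yields the first isomorphism.

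For the second isomorphism I would factor the extension of scalars through $\kk_K$:
$$M_i\otimes_{\F_p}\bar\F_p\;\cong\;M_i\otimes_{\kk_K}\bigl(\kk_K\otimes_{\F_p}\bar\F_p\bigr)\;\cong\;\bigoplus_{j=0}^{f_K-1}\bigl(M_i\otimes_{\kk_K,\tau_j}\bar\F_p\bigr),$$
where $\tau_j$ runs over the $f_K$ embeddings $\kk_K\hookrightarrow\bar\F_p$, using $\kk_K\otimes_{\F_p}\bar\F_p\cong\bar\F_p^{f_K}$. Running the argument of the first part on each summand produces $\Ind_T^H(V_i)$ on each, yielding $(\Ind_T^H(V_i))^{f_K}$. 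The main technical point is showing that every summand gives rise to the \emph{same} induced module $\Ind_T^H(V_i)$ rather than to a Galois twist of it; this rests on the combination $\bar\zeta\in\kk_K$ together with $\bar\zeta^q=\bar\zeta$, which is meant to ensure that the scalar action of $\tau$ is insensitive to the embedding $\tau_j$ and that the cyclic orbit structure imposed by $\upsilon$ is preserved across all blocks.
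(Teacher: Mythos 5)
Your treatment of the first isomorphism is correct and takes a genuinely different route from the paper: the paper writes down an explicit $\kk_F[H]$-isomorphism $M_i\otimes_{\kk_K}\kk_F\xrightarrow{\ \sim\ }\kk_F[H]\otimes_{\kk_F[T]}M_i'$, namely $\alpha\otimes\beta\mapsto\sum_j\upsilon^{-j}\otimes\upsilon^j(\alpha)\beta$, and then tensors with $\bar\F_p$ over $\kk_F$, whereas you decompose $\kk_F\otimes_{\kk_K}\bar\F_p$ into idempotent lines. Both work, and the point you isolate --- that all $f$ embeddings $\sigma_j$ of $\kk_F$ restrict to the \emph{single fixed} embedding on $\kk_K$, hence all send $\bar\zeta$ to the same element of $\bar\F_p$ --- is exactly the reason $\tau$ acts by one scalar on every line.

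For the second isomorphism there is a genuine gap, and it sits precisely at the step you yourself flag as the main technical point. The facts $\bar\zeta\in\kk_K$ and $\bar\zeta^q=\bar\zeta$ control the action of $\Gal(\kk_F/\kk_K)$, which is what part one needed; they say nothing about the $f_K$ embeddings $\tau_j=\phi_p^j|_{\kk_K}$ that index your blocks. On the block $M_i\otimes_{\kk_K,\tau_j}\bar\F_p$ the element $\tau$ acts by the scalar $\tau_j(\bar\zeta^i)=\bar\zeta^{ip^j}$, not by $\bar\zeta^i$, so that block is the Galois twist $\Ind_T^H(V_{ip^j})$. Since $\tau$ acts on $\Ind_T^H(V_m)$ by the single scalar $\bar\zeta^m$ (all its conjugates $\bar\zeta^{mq^l}$ coincide because $e\mid q-1$), these twists are pairwise non-isomorphic whenever $\bar\zeta^i\notin\F_p$. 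Concretely, for $f_K=2$ and $e=p^2-1$ the eigenvalues of $\tau$ on $M_1\otimes_{\F_p}\bar\F_p$ are $\bar\zeta$ and $\bar\zeta^p$, each with multiplicity $f$, while on $\bigl(\Ind_T^H(V_1)\bigr)^{2}$ the only eigenvalue is $\bar\zeta$. So the uniformity you appeal to cannot be established; what your block decomposition actually proves is
\[
 M_i\otimes_{\F_p}\bar\F_p\;\cong\;\bigoplus_{j=0}^{f_K-1}\Ind_T^H\bigl(V_{ip^j}\bigr).
\]
You are in good company: the paper's own proof disposes of the second formula with ``the second one follows immediately,'' which overlooks the same twist. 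The subsequent multiplicity computations (Proposition \ref{CP2:multiplicity} and Corollary \ref{regular}) are unaffected, because they only use the total sum $\bar Y=\bigoplus_{i\in\I{0,I_F}}\bar M_i$ and each residue class modulo $e$ prime to $p$ meets $\I{0,I_F}$ in exactly $e_K$ elements, so replacing $i$ by $ip^j$ merely permutes the contributions.
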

\begin{proof}
Consider the $\kk_F[H]$-module $M_i\otimes_{\F_p}\kk_F$ with the action $h(\alpha\otimes\beta)=h\alpha\otimes\beta$.
Denote by $M_i'$ the $\kk_F[T]$-module obtained from $M_i$ by restrictions of the scalars and consider the  $\kk_F[H]$-module  $ \kk_F[H] \otimes_{\kk_F[T]} M_i$.

It is easy to check that the map   
\begin{align*}
	M_i \otimes_{\kk_K}\kk_F &\stackrel{\sim}{\longrightarrow} 
    \kk_F[H] \otimes_{\kk_F[T]} M_i' \\
  \alpha\otimes \beta &\mapsto \sum_{i=0}^{f-1} \upsilon^{-i} \otimes \upsilon^i(\alpha)\beta
\end{align*}
is an isomorphism of $\kk_F[H]$-modules. We now tensor both sides with $\bar\F_p$; taking into account that $M_i'\otimes_{\kk_F}\bar\F_p=V_i$ and that 
$\kk_F[H] \otimes_{\kk_F[T]} V_i \cong \Ind_T^H(V_i)$ 
we get the first formula; the second one  follows immediately.
\end{proof}

Let $T_0$ be the kernel of the action of $\rho_{|T}$, let $\bar T=T/T_0$ and
let $\tilde{U}$ be the centralizer of $\bar T$ in $U$. Assume that $\tilde{U}$ is generated by $\tilde{\upsilon}$ of order $\tilde{u}$.
Then
\[
   \Ind_T^{T\rtimes{}\tilde{U}}(V_i) \cong \bar\F_p[\tilde{U}] \otimes _{\bar \F_p}V_i=\bigoplus_{\{\beta|\beta^{\tilde{u}}=1\}} V_{(i,\beta)},
\]
where $V_{(i,\beta)}$ is the representation of dimension $1$ such that $\tau$,
$\tilde{\upsilon}$ act by multiplication by $\zeta^i$ and $\beta$. Consequently we have
\[
    \Ind_T^{H}(V_i) = \bigoplus_{\{\beta\mid\beta^{\tilde{u}}=1\}} 
          \Ind_{T\rtimes{}\tilde{U}}^H (V_{(i,\beta)})
\]
and, by Proposition \ref{irrfpbar}, this  is a decomposition into irreducible representations.
Denoting by $\bar M_i:= M_i \otimes_{\F_p} \bar\F_p$, by Proposition \ref{CP2:Uistruct} we have 
$\bar M_i  \cong \left(\Ind_T^H(V_i)\right)^{f_K}$ and hence
$$\bar Y:=\bigoplus_{i\in{}\I{0,I_F}}\bar{M}_i=
   \bigoplus_{i\in{}\I{0,I_F}}  \bigoplus_{\{\beta\mid\beta^{\tilde{u}}=1\}} 
         ( \Ind_{T\rtimes{}\tilde{U}}^H (V_{(i,\beta)}))^{f_K}.$$

We now compute the multiplicity of each irreducible representation contained in $\bar Y.$ 

Observe that if $\zeta^i=\zeta^j$ then $V_{(i,\beta)}=V_{(j,\beta)}$; let $V_{(\alpha, \beta)}$ denote any $V_{(i,\beta)}$ such  that $\zeta^i=\alpha$ and set $J_{(\alpha, \beta)}= \Ind_{T\rtimes{}\tilde{U}}^H (V_{(\alpha,\beta)})$.

For any   $e$-th root of unity $\alpha\in\bar \F_p^\times$, the indices $i$ such that $\zeta^i=\alpha$ are a congruence class modulo $e$ and, among these, there are exactly $p-1$ classes modulo $ep$ which are coprime to $p$. Hence, in the set $\I{0,I_F}$ there are $\frac{p-1}{ep}\frac{e_Fp}{p-1}=e_K$ such indices. 
It follows that the representation $J_{(\alpha,\beta)}$ appears
$e_Kf_K=[K:\Q_p]$ times. Finally, since  the conjugate pairs $(\alpha^{q^i},\beta)$ yield
the same representation,
the multiplicity of $J_{(\alpha, \beta)}$ in $\bar Y$ is $se_Kf_K$, where $s=[U:\tilde U].$

Summarizing, we obtain the following proposition. 

\begin{prop}
\label{CP2:multiplicity}
Let $\alpha, \beta\in \bar\F_p$. Suppose that $\alpha^e=1$ where
$e=e(F/K)$, and that ${\rm ord}(\alpha)=t$. Let $s=\ord_q^\times(t)$,
$f=f(F/K)$, and $\tilde{u}=f/s$ and suppose that
$\beta^{\tilde{u}}=1$. Then the multiplicity of $J_{(\alpha,\beta)}$
in $\bar Y$ is equal to $s\cdot[K:\Q_p]$.
\end{prop}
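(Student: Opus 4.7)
The plan is to assemble the three ingredients already worked out in the paragraphs immediately preceding the statement; the proof is then essentially bookkeeping, and I do not expect any serious obstacle.

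First, combining Proposition \ref{CP2:Uistruct} with the decomposition of $\Ind_T^H(V_i)$ into the induced pieces $J_{(i,\beta)} = \Ind_{T\rtimes\tilde U}^H(V_{(i,\beta)})$ already gives
$$
\bar Y \;=\; \bigoplus_{i\in\I{0,I_F}}\,\bigoplus_{\beta^{\tilde u}=1} J_{(i,\beta)}^{f_K},
$$
so the whole task reduces to counting, for a fixed pair $(\alpha,\beta)$ with $\alpha^e=1$, the number of summands on the right hand side isomorphic to $J_{(\alpha,\beta)}$.

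Next, I would count, for a fixed value of $\zeta^i$, how many indices $i\in\I{0,I_F}$ realize it. The condition $\zeta^i=\alpha$ pins $i$ to a single residue class modulo $e$, and among any $p$ consecutive representatives of that class exactly $p-1$ are coprime to $p$. Since $I_F=pe_F/(p-1)$ and $e_F=e\cdot e_K$, this gives
$$
\frac{p-1}{ep}\cdot\frac{pe_F}{p-1} \;=\; \frac{e_F}{e} \;=\; e_K
$$
admissible indices. Multiplied by the exponent $f_K$ coming from Proposition \ref{CP2:Uistruct}, each such value of $\zeta^i$ (with $\beta$ fixed) contributes exactly $e_Kf_K=[K:\Q_p]$ copies of the corresponding $J_{(\zeta^i,\beta)}$.

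Finally, I would account for the Frobenius identifications among the pairs themselves. By Proposition \ref{irrfpbar}, $J_{(\alpha',\beta)}\cong J_{(\alpha,\beta)}$ precisely when $\alpha'=\alpha^{q^j}$ for some $j$. Since $\alpha$ has order $t$ and $s=\ord_t^\times q=[U:\tilde U]$, the $q$-power orbit of $\alpha$ in $\bar\F_p^\times$ has length $s$, so there are exactly $s$ distinct values of $\zeta^i$ giving rise to summands isomorphic to $J_{(\alpha,\beta)}$. Multiplying the previous count by $s$ yields the asserted multiplicity $s\cdot[K:\Q_p]$. The only subtle point to double-check is that distinct $\beta$'s with $\beta^{\tilde u}=1$ do not get identified by the $U$-action either, which is automatic from the choice of $\tilde U$ as the full centralizer of $\bar T$ in $U$.
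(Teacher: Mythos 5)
Your proposal is correct and follows essentially the same route as the paper: the same decomposition of $\bar Y$ into the pieces $J_{(i,\beta)}^{f_K}$, the same count of $e_K$ admissible indices $i$ with $\zeta^i=\alpha$ in $\I{0,I_F}$, and the same final multiplication by $s$ to account for the Frobenius-conjugate pairs $(\alpha^{q^j},\beta)$. The closing observation that distinct $\beta$'s are not identified (since conjugation by $\upsilon$ fixes $\chi(\tilde\upsilon)$) is a point the paper leaves implicit, but it is correct.
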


\begin{corollary}
\label{regular}
The $H$-module $\bar{Y}$ is isomorphic to $\bar\F_p[H]^{[K:\Q_p]}$. 
\end{corollary}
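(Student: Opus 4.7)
The plan is to compare the multiplicities of the irreducible $\bar\F_p[H]$-modules on both sides of the claimed isomorphism. Since $F/K$ is tame, the order of $H$ is coprime to $p$, so $\bar\F_p[H]$ is semisimple by Maschke's theorem and the decomposition of any finite-dimensional $\bar\F_p[H]$-module into irreducibles is well-defined. In particular, to prove the isomorphism it is enough to check that each irreducible appears with the same multiplicity on the two sides.

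First I would invoke Proposition \ref{irrfpbar} to observe that every irreducible $\bar\F_p[H]$-module is of the form $J_{(\alpha,\beta)}$ for some pair $(\alpha,\beta)$ with $\alpha^e=1$ and $\beta^{\tilde u}=1$, where $\tilde u=\tilde u(\alpha)=f/s$ and $s=\ord_q^\times(\ord(\alpha))$; two such pairs yield isomorphic representations iff they are conjugate under the Frobenius action. Moreover, by the general theory of group algebras over algebraically closed fields of characteristic coprime to the group order, each irreducible $\rho$ appears in the regular representation $\bar\F_p[H]$ with multiplicity equal to $\dim_{\bar\F_p}\rho$. Applied to $J_{(\alpha,\beta)}$, whose dimension is $s$ (the size of the $\upsilon$-orbit of $V_{(\alpha,\beta)}$ under induction from $T\rtimes\tilde U$ to $H$), this gives that $J_{(\alpha,\beta)}$ occurs with multiplicity $s$ in $\bar\F_p[H]$ and hence with multiplicity $s\cdot[K:\Q_p]$ in $\bar\F_p[H]^{[K:\Q_p]}$.

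On the other side, Proposition \ref{CP2:multiplicity} says that the multiplicity of $J_{(\alpha,\beta)}$ in $\bar Y$ is exactly $s\cdot[K:\Q_p]$. Matching these two numbers for every conjugacy class of pairs $(\alpha,\beta)$ gives the isomorphism $\bar Y\cong\bar\F_p[H]^{[K:\Q_p]}$.

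The only point that needs a moment's care is the bookkeeping: one should check that the parameterisation of irreducibles used in Proposition \ref{CP2:multiplicity} (indexing $\alpha$'s by $e$-th roots of unity in $\bar\F_p$, and for each such $\alpha$ indexing $\beta$'s by $\tilde u$-th roots of unity, then quotienting by the $q$-power Frobenius action) matches the parameterisation coming from Proposition \ref{irrfpbar}, and that both agree with the enumeration of irreducibles in the decomposition of $\bar\F_p[H]$. This is essentially the content of the discussion preceding Proposition \ref{CP2:multiplicity}, so no obstacle arises; the corollary follows by a direct multiplicity count with no further computation.
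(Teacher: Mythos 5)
Your argument is essentially the paper's own: both proofs simply observe that every irreducible occurs in the regular representation $\bar\F_p[H]$ with multiplicity equal to its degree, while by Proposition \ref{CP2:multiplicity} it occurs in $\bar Y$ with multiplicity equal to its degree times $[K:\Q_p]$, whence the isomorphism. The one point to correct is your justification of semisimplicity: tameness of $F/K$ only forces the ramification index $e(F/K)$ to be prime to $p$, not the order of $H$ --- the inertia degree $f(F/K)$ may be divisible by $p$, and the paper explicitly allows $H$ to have a nontrivial $p$-Sylow subgroup (see Remark \ref{Hsplit} and Lemma \ref{CP2:coho}), so Maschke's theorem cannot be invoked on these grounds. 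The fact you actually need, that each irreducible appears in $\bar\F_p[H]$ with multiplicity equal to its degree, is exactly the unargued opening sentence of the paper's proof, so your write-up is no less complete than the original; but to be safe when $p$ divides $|H|$ one should phrase the count in terms of projective covers (Proposition \ref{CP2:Uiprojective} makes $\bar Y$ a projective module, and the projective cover of each simple occurs in $\bar\F_p[H]$ with multiplicity the dimension of that simple) rather than appeal to semisimplicity.
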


\begin{proof} The $H$-module $\bar\F_p[H]$ corresponds to the regular representation of $H$, 
and hence each irreducible representation occurs in $\bar\F_p[H]$ with a multiplicity 
equal to its degree. The corollary follows by noticing that the multiplicities of all
irreducible representations in $\bar{Y}$ are equal to their degrees multiplied by 
$[K:\Q_p]$.
\end{proof}

\smallskip

\begin{remark}\label{oss}
{\rm
Let $D=D_{(\alpha, \beta)}$ be the field of definition of $J_{(\alpha,\beta)}$; by Proposition \ref{CP2:repdim} its degree over
$\F_p$ is equal to $d=\lcm(w,(r,f_K))$.
Let $V$ be the irreducible sub-representation of $Y$ defined over $\F_p$,  obtained by summing all the conjugates of $J_{(\alpha,\beta)}$ over $\F_p$; then its multiplicity in $Y$ is $s\cdot[K:\Q_p]$, while its degree is  $s\cdot\lcm(w,(r,f_K))$.
In particular,
the $H$-module $Y$ is not isomorphic, in general, to $\F_p[H]^{[K:\Q_p]}$ whereas this was true in the case of extensions of degree $p$, considered in \cite{delcorso2007compositum} and \cite{dalawat2010serre}.
 }
\end{remark}

For each irreducible $\F_p[H]$-module $V$ contained in $Y$, we now count
the number $n_V$ of submodules isomorphic to $V$ contained in $Y$.
As before, $V$ is the sum of the conjugates of  $J_{(\alpha,\beta)}$ for some $\alpha,\beta$; let $D$ be  the field 
of definition of $J_{(\alpha,\beta)}$, let $[D:\F_p]=d$, and call $m$ the common multiplicity of  $J_{(\alpha,\beta)}$ in $\bar Y$ and of $V$ in $Y$. Then,  all $\F_p$-irreducible 
modules isomorphic to $V$ are contained in $V^m$  
and $n_V$ is equal to the number of $\F_p[H]$-embeddings of $V$ into $V^m$ divided by the number of embeddings with the same image. Now, the $\F_p[H]$-embeddings of $V$ into $V^m$ are in one to one correspondence with the $D[H]$-embeddings of $J_{(\alpha,\beta)}$ into $(J_{(\alpha,\beta)})^m$, associating to $\varphi:\, V\rightarrow V^m$  the restriction to $J_{(\alpha,\beta)}$ of the extension of $\varphi$ to $V\otimes_{\F_p} D.$ Moreover, two embeddings of $V$ have the same image if and only if the same is true for the corresponding embeddings of $J_{(\alpha,\beta)}$. 
Therefore, we count the $D[H]$-embeddings 
\[
 \psi:\, J_{(\alpha,\beta)} \rightarrow (J_{(\alpha,\beta)})^m.
\]
By Schur Lemma the projection of each component of $\psi$ is the multiplication by an element $d_i\in D$, and $\psi$  is injective if and only if the $d_i$ are not all zero. Moreover, 
two embeddings have the same image if and only if they differ by multiplication by a
constant. It follows that the total number of $D[H]$-embeddings is  $|D|^{m}-1$, while the possible constants
are $|D|-1$. As a conclusion, we have proved the following proposition:

\begin{prop}
\label{CP2:submods}
Assume that the irreducible representation $J_{(\alpha,\beta)}$ has
multiplicity $m$ in $\bar Y$. Assume that the field of 
definition of $J_{(\alpha,\beta)}$ is $D$, and let $d=[D:\F_p]$. Then
the number of representations defined over $\F_p$ and containing a
representation isomorphic to $J_{(\alpha,\beta)}$ is
$(p^{dm}-1)/(p^d-1)$.
\end{prop}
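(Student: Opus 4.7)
The plan is to count the $\F_p[H]$-submodules of $Y$ isomorphic to the $\F_p$-irreducible representation $V$ whose extension of scalars contains $J_{(\alpha,\beta)}$. By the hypothesis on the multiplicity, the $V$-isotypic component of $Y$ has the form $V^m$, so every such submodule sits inside $V^m$ and it suffices to count submodules of $V^m$ that are isomorphic to $V$. I would express this count as the total number of $\F_p[H]$-embeddings $V\hookrightarrow V^m$ divided by the number of embeddings sharing any given image, that is, by $|\Aut_{\F_p[H]}(V)|$.

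The second step is to pass to coefficients in $D$. After tensoring with $D$, the $\F_p$-irreducible $V$ decomposes as the direct sum of the $\Gal(D/\F_p)$-conjugates of $J_{(\alpha,\beta)}$, and any $\F_p[H]$-embedding $\varphi\colon V\hookrightarrow V^m$ extends uniquely to a $D[H]$-map whose restriction to the summand $J_{(\alpha,\beta)}$ is a $D[H]$-embedding $\psi\colon J_{(\alpha,\beta)}\hookrightarrow J_{(\alpha,\beta)}^m$; the remaining conjugate summands contribute nothing because distinct Galois conjugates of an absolutely irreducible admit no nonzero $H$-map between them. Conversely, starting from $\psi$ one recovers $\varphi$ by summing Galois conjugates. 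This yields a bijection between the two sets of embeddings which preserves the relation ``has the same image''.

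The final step is a Schur-lemma enumeration. Since $J_{(\alpha,\beta)}$ is absolutely irreducible with field of definition $D$, one has $\End_{D[H]}(J_{(\alpha,\beta)})=D$, so every $D[H]$-map $\psi\colon J_{(\alpha,\beta)}\to J_{(\alpha,\beta)}^m$ is determined by a tuple $(d_1,\ldots,d_m)\in D^m$ of scalar multiplications on the components. The map is injective exactly when the tuple is nonzero, producing $|D|^m-1=p^{dm}-1$ embeddings; two injective maps share the same image precisely when they are $D^\times$-proportional, giving $|D|-1=p^d-1$ equivalent choices per image. The quotient yields the announced value $(p^{dm}-1)/(p^d-1)$. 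The only mildly delicate point is the Galois-descent identification in the middle step; with it in place, the enumeration is a clean application of Schur's lemma.
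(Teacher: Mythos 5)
Your proposal is correct and follows essentially the same route as the paper: reduce to counting submodules of the isotypic component $V^m$, identify $\F_p[H]$-embeddings of $V$ with $D[H]$-embeddings of $J_{(\alpha,\beta)}$ into $J_{(\alpha,\beta)}^m$ by extension of scalars, and then apply Schur's lemma to enumerate $(p^{dm}-1)/(p^d-1)$. Your added justification of the Galois-descent step (no nonzero maps between distinct conjugates) is a welcome elaboration of a point the paper leaves implicit.
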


\section{Extensions with a prescribed Galois group}
\label{section5}
In this section we classify the isomorphism classes of the extensions of degree $p^2$ of $K$ without
intermediate extensions. 
Actually, we give a more precise result, namely,  for each possible Galois group $G$, we count the number of isomorphism classes of the extensions whose normal closure has a Galois group isomorphic to $G$.

We start by recalling some relevant facts proved above.

By Theorem \ref{CP2:th1},  the isomorphism classes correspond to the irreducible $H$-submodules of dimension 2 of ${F^\times}/{(F^\times)^p}$, where $F$  is as in Remark \ref{split} and $H=\Gal(F/K)$.
An irreducible submodule $X\subset{}{F^\times}/{(F^\times)^p}$ corresponds to
an isomorphism class $[L]$ of extensions of $K$ and, denoting by $\tilde L$  the normal closure of $L/K$, we have  
$$\Gal(\tilde L/K)\cong X\rtimes{}(H/\ker(\rho)),$$
 where $\rho:H\rightarrow\Aut(X)$ is the restriction of the Galois action to $X$. 
We shall see that the isomorphism class of the Galois group  depends only on the group $H/\ker(\rho)$; more precisely, we have the following lemma. 
\begin{lemma}
\label{semidir}
Let $X$ and $X'$ be irreducible submodules of ${F^\times}/{(F^\times)^p}$ of dimension 2. Let $\rho$ and
$\rho'$ be the restrictions of the Galois action of $H$ to $X$ and $X'$, respectively. Then
$$X\rtimes{}(H/\ker(\rho))\cong X'\rtimes{}(H/\ker(\rho')) \iff H/\ker(\rho)\cong H/\ker(\rho').$$
\end{lemma}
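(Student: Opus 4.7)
The plan is two-directional. The forward implication follows because $X$ is characteristic in the semidirect product, and the reverse implication relies on the classification of irreducible representations from Section~\ref{section4}.

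For $(\Rightarrow)$: write $Q = H/\ker(\rho)$ and $G = X \rtimes Q$. I would first observe that $Q$ acts faithfully and irreducibly on $X \cong \F_p^2$, so any normal $p$-subgroup $N$ of $Q$ fixes a nonzero vector (by the class equation applied to a $p$-group acting on a nontrivial $\F_p$-vector space); the $Q$-invariant subspace $X^N$ then equals $X$ by irreducibility, forcing $N = 1$ by faithfulness. Thus $O_p(Q) = 1$. Next, any nontrivial normal $p$-subgroup $M$ of $G$ must contain $X$: the $Q$-invariant subgroup $M \cap X$ equals $1$ or $X$, and the first case would give $[M,X] \subseteq M \cap X = 1$, hence $M \subseteq C_G(X) = X$ (using $C_Q(X) = 1$), contradicting $M \neq 1$. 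Combining these, $O_p(G) = X$, so $X$ is characteristic in $G$ and $G/X \cong Q$ is determined up to isomorphism by $G$. The analogous statement applies to $G' = X' \rtimes Q'$, and the forward implication follows.

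For $(\Leftarrow)$: given an abstract isomorphism $\varphi: H/\ker(\rho) \xrightarrow{\sim} H/\ker(\rho')$, I would use $\varphi$ to regard both $X$ and $X'$ as faithful $2$-dimensional irreducible $\F_p$-representations of the single group $Q := H/\ker(\rho)$; the goal reduces to showing that any two such representations yield isomorphic semidirect products. The standard criterion is that $X \rtimes_\rho Q \cong X' \rtimes_{\rho'} Q$ exactly when there exists $\sigma \in \Aut(Q)$ making $\rho' \circ \sigma$ equivalent to $\rho$. Since $Q$ is again of the shape of a split tame group (the $p$-core of $H$ already lying in $\ker\rho$ by the last assertion of Proposition~\ref{irrfpbar}), I would apply that proposition to $Q$ itself to describe both representations as $\Ind_{\bar T \rtimes \tilde{\bar U}}^{Q}(V_\chi)$ for characters on a common subgroup of $Q$ whose restriction to the cyclic normal part $\bar T$ is faithful. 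The plan is then to lift power automorphisms of $\bar T$ to $\Aut(Q)$ and combine them with the $\Gal(\bar\F_p/\F_p)$-action on characters (which does not change the $\F_p$-isomorphism class of the induced representation) to move the character defining $\rho$ to one defining $\rho'$.

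The hard part will be this final transitivity claim: verifying that $\Aut(Q)$ together with Galois conjugation acts transitively on the finite set of characters yielding faithful $2$-dimensional irreducible $\F_p$-representations. This requires a careful analysis of automorphisms of the split extension $Q = \bar T \rtimes \bar U$, and a check that the constraints imposed by dimension $2$ over $\F_p$ and by faithfulness on $\bar T$ leave only a single combined orbit. Once this is in place, constructing the isomorphism of semidirect products is a routine matter of conjugating one representation to the other.
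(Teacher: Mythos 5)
Your forward implication is complete and correct; in fact it is more careful than the paper, which dismisses that direction as obvious. Your verification that $O_p(X\rtimes Q)=X$, so that $Q$ is recovered as $G/O_p(G)$, is exactly the point that needs checking when $p$ divides $|Q|$ (which can happen here, e.g.\ for $p=2$, where the non-abelian quotient is isomorphic to $S_3$).

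The reverse implication, however, is where essentially all of the content of the lemma lies, and there you have only a plan. You correctly reduce to showing that any two faithful $2$-dimensional irreducible $\F_p$-representations of the abstract group $Q$ differ by an automorphism of $Q$ (equivalently, that abstractly isomorphic images in $GL_2(\F_p)$ are conjugate), but you explicitly defer the verification of this transitivity claim — and that verification is precisely what the paper spends much of Section~\ref{section5} on. In the cyclic cases one checks that $Q$ is the unique subgroup $N_c$ of $\F_{p^2}^\times$ of order $c$ acting by multiplication; in the non-abelian case ($r=2$, $w=1$) one needs Proposition~\ref{isocla} together with Lemmas~\ref{gamma2} and~\ref{phiq=q'} to show that the isomorphism class of $\langle\mathcal{T}_\alpha,\mathcal{U}_\beta\rangle$ is determined by $c$ and the class of $\beta$ modulo $\langle\gamma\rangle^{p+1}$, followed by an explicit conjugation into a fixed group $\mathcal{H}_0$ and a descent from $\F_{p^2}$-conjugacy to $\F_p$-conjugacy. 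Two concrete obstacles for your plan as stated: (i) $Q=H/\ker(\rho)$ need not again be a split group of the form $\bar T\rtimes \bar U$ (in the non-abelian case $\mathcal{U}_\beta^2=\mathcal{T}_\beta$ lies in the diagonal part), so Proposition~\ref{irrfpbar} cannot simply be re-applied to $Q$; and (ii) an abstract isomorphism $Q\to Q'$ need not carry the image of the inertia subgroup to the image of the inertia subgroup — controlling this is the whole point of Lemma~\ref{gamma2}, which requires a genuine case distinction (and an exceptional configuration does occur). So your strategy is the right one, but the ``hard part'' you flag is not a routine verification: it is the proof, and without it the lemma is not established.
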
 
\begin{proof}
The {\it if}\,  part is obvious.  
To prove the converse it is enough to show that if $H/\ker(\rho)\cong H/\ker(\rho')$ then, by identifying the two groups with their images in $GL_2(\F_p)$, they are conjugate. 
Let $\rho_0$ (resp. $\rho'_0$) be an irreducible component of $\rho$ (resp. $\rho'$) over $\F_{p^2}$. 
With a case by case analysis we will show below that $H/\ker(\rho_0)$ and $H/\ker(\rho'_0)$ are conjugate over $\F_{p^2}.$ Clearly, this implies that 
$H/\ker(\rho)$ and $H/\ker(\rho')$ are conjugate over $\F_{p^2}$; now, since $H/\ker(\rho)$ and $H/\ker(\rho')$  are realizable over $\F_p$,  they are conjugate over $\F_p$.
\end{proof} 

As before, we shall write   
 $H=T\rtimes{}U$, where $T=\langle\tau\rangle$ is
the inertia subgroup and $U=\langle\upsilon\rangle$. 

Each irreducible representation $\rho$ of $H$ is described by two matrices $\rho(\tau)={\mathcal T}_\alpha$, $\rho(\upsilon)={\mathcal U}_\beta$  for  some $\alpha,\beta\in\bar\F_p^\times$. 
Moreover, $\alpha$ and $\beta$ must satisfy the following conditions: let $t$ be the order of $\alpha$, $r=\ord_t^\times(p)=[\F_p(\alpha):\F_p]$, $s=\ord_t^\times(q)$ and $w=[\F_p(\beta):\F_p]$; then 
\begin{equation}
\label{conditions}
t\mid e\quad {\rm and} \quad \ord(\beta) \mid \frac fs \,.
\end{equation}

According to Proposition \ref{CP2:repdim}, the degree 
of the representation given by the matrices ${\mathcal T}_\alpha$ and ${\mathcal U}_\beta$ 
is $\lcm(\sfr{rw}{(r,f_K)},r)$. If follows that, if the degree of $\rho$ is 2, then necessarily $\alpha,\beta\in\F_{p^2}^\times$; moreover, $s=\sfr{r}{(r,f_K)}=1,2$, and the case $s=2$ can occur only if $w=1$, i.e.,
$\beta\in\F_p^\times$. 
Now, our construction of the field $F$ is such that both $e$ and $f$ are divisible by $p^2-1$
(in fact, it is immediate to check that $F$ contains the unramified extension of $K$ of degree
$p^2-1$ and the splitting field of $X^{p^2-1}-\pi_k$); hence every pair $(\alpha,\beta)$ satisfying
the condition $\lcm(\sfr{rw}{(r,f_K)},r)=2$ satisfies \eqref{conditions} as well.

The representations of degree 2 do not occur in the submodules $\F_p$ and $M_\omega$ of 
Proposition \ref {IF}; hence, by Proposition \ref{CP2:multiplicity}, their multiplicity in $F^\times/(F^\times)^p$ is $s\cdot[K:\Q_p]$.

For convenience of the reader, we state without proof the following simple lemma. 
\begin{lemma}
\label{psiab}
For any ordered pair $(a,b)$ of natural numbers, let $\psi(a,b)$ be the number of elements of 
order $a$ in the group $\Z/a\Z\times{}\Z/b\Z$. Then 
\begin{equation}
\label{CP2:eqsubgr}
  \psi(a,b) = a\cdot(a,b) \cdot \prod_{\substack{\ell\text{ prime}\\\ell\mid a/(a,b)}}
      \left(1-\frac{1}{\ell}\right)
   \cdot \prod_{\substack{\ell\text{ prime}\\\ell \mid a,\ \ell\nmid
      a/(a,b)}}\left(1-\frac{1}{\ell^2}\right).
\end{equation}
\end{lemma}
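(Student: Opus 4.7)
The plan is to reduce the count to a computation at each prime $\ell$ via the Chinese Remainder Theorem and then to assemble the pieces. First, writing $a=\prod_\ell \ell^{\alpha_\ell}$ and $b=\prod_\ell \ell^{\beta_\ell}$, CRT gives
\[
\Z/a\Z\times \Z/b\Z \;\cong\; \prod_{\ell}\bigl(\Z/\ell^{\alpha_\ell}\Z\times \Z/\ell^{\beta_\ell}\Z\bigr),
\]
and an element has order exactly $a$ if and only if, for every prime $\ell\mid a$, its $\ell$-component has order exactly $\ell^{\alpha_\ell}$. So $\psi(a,b)$ factors as a product over primes $\ell\mid a$ of local counts.

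Next I would compute the local count. Fix $\ell\mid a$ and set $\alpha=\alpha_\ell$, $\beta=\beta_\ell$. The subgroup of elements of order dividing $\ell^k$ in $\Z/\ell^\alpha\Z\times \Z/\ell^\beta\Z$ has order $\ell^{\min(k,\alpha)+\min(k,\beta)}$. Applying this with $k=\alpha$ and $k=\alpha-1$ and subtracting gives
\[
\#\{\text{order exactly }\ell^\alpha\} \;=\; \ell^{\alpha+\min(\alpha,\beta)} - \ell^{\alpha-1+\min(\alpha-1,\beta)}.
\]
This splits into two sub-cases according to whether $\alpha\le \beta$ or $\alpha>\beta$: in the first case the count simplifies to $\ell^{2\alpha}\bigl(1-\ell^{-2}\bigr)$, and in the second to $\ell^{\alpha+\beta}\bigl(1-\ell^{-1}\bigr)$.

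Finally I would reassemble the product. Setting $d=(a,b)$ one has $v_\ell(d)=\min(\alpha,\beta)$, so the primes with $\alpha>\beta$ are exactly those with $\ell\mid a/d$, while the primes with $\alpha\le\beta$ are those with $\ell\mid a$ and $\ell\nmid a/d$. Collecting the prime powers gives
\[
\prod_{\ell\mid a,\,\ell\nmid a/d}\ell^{2\alpha}\cdot\prod_{\ell\mid a/d}\ell^{\alpha+\beta} \;=\; a\cdot d,
\]
since $\prod_\ell \ell^\alpha=a$ and, noting that $v_\ell(d)=\alpha$ when $\ell\mid a$ and $\ell\nmid a/d$ while $v_\ell(d)=\beta$ when $\ell\mid a/d$, the remaining factors multiply to $d$. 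Combining this with the Euler-like factors produces the stated formula. The only mildly delicate step is the bookkeeping in this last reassembly, but once the dichotomy ``$\ell\mid a/d$ vs.\ $\ell\nmid a/d$'' is identified with ``$\alpha>\beta$ vs.\ $\alpha\le\beta$'' the identity is immediate.
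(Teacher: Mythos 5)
Your proof is correct. The paper deliberately omits a proof of this lemma (``we state without proof the following simple lemma''), so there is nothing to compare against; your argument --- CRT reduction to the $\ell$-primary parts, the local count $\ell^{\alpha+\min(\alpha,\beta)}-\ell^{\alpha-1+\min(\alpha-1,\beta)}$ obtained by subtracting the orders of the $\ell^{\alpha}$- and $\ell^{\alpha-1}$-torsion subgroups, and the identification of the dichotomy $\alpha>\beta$ versus $\alpha\le\beta$ with $\ell\mid a/(a,b)$ versus $\ell\nmid a/(a,b)$ --- is the natural one and checks out, including the reassembly $\prod\ell^{2\alpha}\cdot\prod\ell^{\alpha+\beta}=a\cdot(a,b)$. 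One wording nit: your ``if and only if'' should also require the $\ell$-components at primes $\ell\mid b$ with $\ell\nmid a$ to be trivial (otherwise the order is a proper multiple of $a$); this does not affect the count, since each such prime then contributes a factor of $1$ to the product.
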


We are now ready to count  the extensions of degree $p^2$ without intermediate extensions whose normal closure has a prescribed Galois group. We distinguish two cases.

\underline{Case $2\mid{}f_K$.} In this case the degree is $2$ if and only if $\max\{r,w\}=2$, and necessarily we have $s=1$. 
The group $H/\ker(\rho_0)$ is cyclic of order equal to the order $c$ of $(\alpha,\beta)$ in $\F_{p^2}^\times\times\F_{p^2}^\times$.
It follows that  $\langle {\mathcal T}_\alpha, \mathcal{U}_\beta\rangle\cong H/\ker(\rho_0)$ is the unique subgroup $N_c$ of $\F_{p^2}^\times$ of order $c$, which acts on $\F_{p^2}^+$ by multiplication, and the condition required in the proof of Lemma \ref{semidir} is verified. 

The number of elements in
$\F_{p^2}^\times\times\F_{p^2}^\times$ having order $c$ with 
$c\mid{}p^2-1$ but $c\nmid{}p-1$ s equal to $\psi(c,p^2-1)$.
On the other
hand,  for a fixed $(\alpha, \beta)$ of order $c$, the representation $J_{(\alpha,\beta)}=V_{(\alpha,\beta)}$ has multiplicity $n=[K:\Q_p]$, and, by
Proposition~\ref{CP2:submods}, the number of sub-representations $X$  which contain $J_{(\alpha, \beta)}$ is $(p^{2n}-1)/(p^2-1)$, since the
representation is defined over $\F_{p^2}$. 
Now, $X=J_{(\alpha,\beta)}\oplus J_{(\alpha^p,\beta^p)}$, hence if we sum over all pairs $(\alpha,\beta)$ of order $c$, each module $X$ is counted twice. So, we obtain that the number of classes
of extensions whose normal closure has Galois group isomorphic to $\F_{p^2}^+\rtimes{}N_c$ is exactly
\[
    \frac{p^{2n}-1}{p^2-1} \cdot \frac{1}{2}\psi(c,p^2-1).
\]
Summing over all pairs  $(\alpha,\beta)$ in $\F_{p^2}^\times\times\F_{p^2}^\times$ but
not in $\F_{p}^\times\times\F_{p}^\times$ we obtain the total number of classes of extensions with
degree $p^2$ having no intermediate extensions
\[
    \frac{p^{2n}-1}{p^2-1} \cdot \frac{1}{2}\left[ (p^2-1)^2 - (p-1)^2\right]
      = \frac{p(p^2+p-2)(p^{2n}-1)}{2(p+1)}.
\]

\underline{Case $2\nmid{}f_K$.} In this case the representations of dimension $2$ over are
obtained when one between  $r,w$ is $1$ and the other is $2$.

For $r=1$ and $w=2$ we have again $s=1$ and the group $H/\ker(\rho_0)$ is cyclic of order equal to the order $c$ of $(\alpha,\beta)$ in $\F_{p}^\times\times\F_{p^2}^\times$. 
As before, $\langle {\mathcal T}_\alpha, \mathcal{U}_\beta\rangle\cong H/\ker(\rho_0)$ is the unique subgroup of $\F_{p^2}^\times$ of order $c$, and again  the condition required in the proof of Lemma \ref{semidir} is verified. 
The pairs $(\alpha,\beta)$ of order $c$ corresponding to this case are those of the set $(\F_p^\times\times\F_{p^2}^\times)\setminus(\F_{p}^\times\times\F_{p}^\times)$. Since $c\mid{}p^2-1$
but $c\nmid{}p-1$, the possible pairs  are $\psi(c,p-1)$ and, similarly to
above, the number of extensions with Galois group $\F_{p^2}^+\rtimes{}N_c$ is
\[
    \frac{p^{2n}-1}{p^2-1} \cdot \frac{1}{2}\psi(c,p-1).
\]
Finally, the total number of extensions obtained in this way is
\[
    \frac{p^{2n}-1}{p^2-1} \cdot \frac{1}{2}\left[ (p-1)(p^2-1) - (p-1)^2\right]
      = \frac{p(p-1)(p^{2n}-1)}{2(p+1)}.
\]

\medskip

Assume now $r=2$, $w=1$ and in this case $s=2.$ The group $H$ acts on  
 $J_{(\alpha,\beta)}$ (where $\alpha\in\F_{p^2}^\times\setminus\F_p^\times$ and $\beta\in\F_p^\times$) and the action is described by the matrices
\[
   \mathcal{T}_\alpha=\begin{pmatrix} \alpha & \\ & \alpha^p \end{pmatrix},\qquad
   \mathcal{U}_\beta=\begin{pmatrix}  & \beta \\ 1 &  \end{pmatrix}.
\]
Observe that the group  $H/\ker(\rho)\cong\langle \mathcal{T}_\alpha,\mathcal{U}_\beta\rangle$ is non-abelian.
 The multiplicity of the representation is
equal to $2n$
and $J_{(\alpha,\beta)}$ is  defined over $\F_p$;  by Proposition~\ref{CP2:submods}, the number of representations isomorphic to $J_{(\alpha,\beta)}$ are 
$(p^{2n}-1)/(p-1)$.

\smallskip

We now want to classify the isomorphism classes of groups generated by $\mathcal{T}_\alpha$ and $\mathcal{U}_\beta$, as $\alpha$ and $\beta$ vary.

Let $\gamma$ be a generator of the subgroup of $\F_{p^2}^\times$ generated by $\alpha$ and $\beta$
and  put $c=\ord(\gamma)=|\langle\alpha,\beta\rangle|$; with the notation already introduced we have $\langle\mathcal{T}_\gamma\rangle\cong\langle\gamma\rangle=N_c$.

\begin{prop}\label{isocla}
The isomorphism class of the group ${\mathcal H}=\langle\mathcal{T}_\alpha,\mathcal{U}_\beta\rangle$, 
is identified by $c$ and the class of $\beta$ in $\langle\gamma\rangle/\langle\gamma\rangle^{p+1}$.
\end{prop}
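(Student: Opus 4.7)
My plan is to translate the matrix group $\mathcal{H}$ into an abstract presentation and then classify the resulting groups up to isomorphism by analysing their extension structure.

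Setting $a=\mathcal{T}_\gamma$ and $b=\mathcal{U}_\beta$, direct matrix computation yields $a^c=1$, $bab^{-1}=\mathcal{T}_{\gamma^p}=a^p$ (conjugation swaps the two diagonal entries), and $b^2=\mathcal{U}_\beta^2=\beta\cdot I=\mathcal{T}_\beta$ (using $\beta\in\F_p^\times$). Since $\beta\in\langle\gamma\rangle$ by the very definition of $\gamma$, we may write $\beta=\gamma^i$, so the last relation reads $b^2=a^i$. This gives the abstract presentation
\[
\mathcal{H} \cong \mathcal{H}_{c,i} := \langle a,b \mid a^c=1,\ bab^{-1}=a^p,\ b^2=a^i\rangle,
\]
a group of order $2c$. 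It fits into a (generally nonsplit) extension of $\Z/2\Z$ by $\langle a\rangle\cong\Z/c\Z$ with the involution acting via the order-two automorphism $x\mapsto x^p$ (well-defined because $p^2\equiv 1\pmod c$).

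The equivalence class of this extension in $H^2(\Z/2\Z,\langle a\rangle)$ is precisely the class of $b^2=\beta$ in $\F_p^\times\cap\langle\gamma\rangle$ modulo norms, which sits naturally as a subquotient of $\langle\gamma\rangle/\langle\gamma\rangle^{p+1}$. To pass from equivalence of extensions to isomorphism of abstract groups one must further quotient by the action of $\Aut(\langle\gamma\rangle)\cong(\Z/c\Z)^\times$; this action corresponds precisely to changing the generator $\gamma$ of the cyclic subgroup, and so the \emph{class of $\beta$ in $\langle\gamma\rangle/\langle\gamma\rangle^{p+1}$} in the proposition is to be read up to this natural ambiguity. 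Concretely, suppose $\phi:\mathcal{H}_{c,i}\to\mathcal{H}_{c',i'}$ is an isomorphism: comparing orders gives $c=c'$, and after composing with a suitable automorphism of $\mathcal{H}_{c',i'}$ we may assume $\phi(\langle a\rangle)=\langle a'\rangle$, so that $\phi(a)=(a')^k$ with $(k,c)=1$ and $\phi(b)=(a')^jb'$ for some $j$. The first two defining relations then hold automatically, while $b^2=a^i$ translates into
\[
(a')^{j(p+1)+i'}=\phi(b)^2=\phi(a)^i=(a')^{ki},
\]
giving $i'\equiv ki\pmod{(c,p+1)}$; this is exactly the statement that $[\beta']$ and $[\beta^k]$ coincide in $\langle\gamma\rangle/\langle\gamma\rangle^{p+1}$. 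The converse is a direct verification: given $j,k$ with the above congruence, the assignment $a\mapsto(a')^k$, $b\mapsto(a')^jb'$ respects all three relations and thus extends to an isomorphism.

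The main obstacle is the reduction $\phi(\langle a\rangle)=\langle a'\rangle$, because $\langle a\rangle$ is not always characteristic in $\mathcal{H}_{c,i}$: for example when $p=3$, $c=4$, $i=2$ one recovers the quaternion group $Q_8$, which has three distinct cyclic subgroups of order $4$. I plan to deal with this by showing directly that $\Aut(\mathcal{H}_{c,i})$ acts transitively on the set of cyclic subgroups of order $c$: starting from the identity $(a^\ell b)^2=a^{\ell(p+1)+i}$ for squares of coset representatives, one can list all such subgroups and, for each, construct by hand an automorphism of $\mathcal{H}_{c,i}$ sending it onto $\langle a\rangle$. With this reduction in place, the calculation above applies and yields the proposition.
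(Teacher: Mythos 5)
Your setup (the presentation $\mathcal{H}\cong\langle a,b\mid a^c=1,\ bab^{-1}=a^p,\ b^2=a^i\rangle$ with $\beta=\gamma^i$) and the easy direction (constructing an isomorphism when $i'\equiv i\pmod{(c,p+1)}$ via $a\mapsto a'$, $b\mapsto (a')^jb'$) are correct and match the paper's computation with diagonal matrices. But the hard direction has a genuine gap: the reduction to $\phi(\langle a\rangle)=\langle a'\rangle$, which you correctly identify as the main obstacle, is only announced as a plan (``I plan to deal with this by showing directly that $\Aut(\mathcal{H}_{c,i})$ acts transitively on the set of cyclic subgroups of order $c$''). No such transitivity statement is proved, and proving it is exactly the delicate part of the argument. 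The paper does not prove transitivity at all; instead it isolates invariants that every isomorphism must preserve: Lemma~\ref{gamma2} shows that $\langle\mathcal{T}_{\gamma^2}\rangle$ is always carried to $\langle\mathcal{T}_{\gamma^2}\rangle$ (and that $\langle\mathcal{T}_{\gamma}\rangle$ can fail to be preserved only in the special case $\gamma^2\in\F_p^\times$, $\beta,\beta'\in\langle\gamma^2\rangle$), and Lemma~\ref{phiq=q'} uses the set of squares $\mathcal{H}^2=\langle\mathcal{T}_{\gamma^2}\rangle\cup\mathcal{T}_\beta\langle\mathcal{T}_{\gamma^{p+1}}\rangle$ to force $\phi(\mathcal{T}_\beta\langle\mathcal{T}_{\gamma^{p+1}}\rangle)=\mathcal{T}_{\beta'}\langle\mathcal{T}_{\gamma^{p+1}}\rangle$ directly. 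To complete your route you would either have to carry out the transitivity case analysis in full, or replace it by an invariant-based argument of this kind.

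A second, smaller problem: you end with $i'\equiv ki\pmod{(c,p+1)}$ for some $k$ prime to $c$, and then reinterpret the proposition as holding only ``up to the action of $\Aut(\langle\gamma\rangle)$''. That weakening is unnecessary and misreads the statement, which is literally true as written. Since $\beta\in\F_p^\times$ and $\beta\in\langle\gamma\rangle$, one has $\beta^2=\beta^{p+1}\cdot\beta^{-(p-1)}=\beta^{p+1}\in\langle\gamma\rangle^{p+1}$, so the class of $\beta$ in the cyclic group $\langle\gamma\rangle/\langle\gamma\rangle^{p+1}$ has order at most $2$; such a class is fixed by $x\mapsto x^k$ for every $k$ prime to $c$ (if the quotient has even order then $c$ is even and $k$ is odd; if odd, the class is trivial). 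Hence $[\beta^k]=[\beta]$ and your congruence does give $[\beta']=[\beta]$ exactly. This is precisely the observation with which the paper closes its proof (``this order is either $1$ or $2$\,\dots''), so this part is easily repaired; the missing transitivity argument is the substantive gap.
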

\begin{proof}
Frist observe that  the matrices
$\mathcal{T}_\alpha$ and $\mathcal{U}_\beta^2=\mathcal{T}_\beta$ commute and they generate the cyclic group $\langle\mathcal{T}_\gamma\rangle$, of order $c$. This subgroup has index 2 in $ \mathcal{H}$ and it is  a maximal cyclic subgroup. It follows that $ \mathcal{H}=\langle \mathcal{T}_\gamma,\mathcal{U}_\beta\rangle$.

Multiplying $\mathcal{U}_\beta$ by a diagonal matrix in the group, we obtain
\[
 \begin{pmatrix} \gamma^i & \\ & \gamma^{ip} \end{pmatrix}
 \cdot \begin{pmatrix}  & \beta \\ 1 &  \end{pmatrix}
  = \begin{pmatrix}  & \gamma^i\beta \\ \gamma^{ip} &  \end{pmatrix},
\]
which scaling the second vector of the basis by $\gamma^{ip}$ becomes
\[
  \begin{pmatrix}  & \gamma^i\gamma^{ip}\beta \\ 1 &  \end{pmatrix} =
  \begin{pmatrix}  & \gamma^{i(p+1)}\beta \\ 1 &  \end{pmatrix}={\mathcal U}_{\gamma^{i(p+1)}\beta}
\]
(note that this scaling is a conjugation of $\mathcal{H}$ by a diagonal matrix so it fixes $\mathcal{T}_{\gamma}$). In particular, $\beta$ is defined up to elements of $\langle\gamma\rangle^{p+1}$. 
It follows that if $\langle\mathcal{T}_\alpha,\mathcal{U}_\beta\rangle$ and $\langle\mathcal{T}_{\alpha'},\mathcal{U}_{\beta'}\rangle$ are such that $|\langle\alpha,\beta\rangle|=|\langle\alpha',\beta'\rangle|$ and $\beta'=\beta\gamma^{i(p+1)}$, then the two groups are isomorphic.

\smallskip

On the other hand,  let ${\mathcal H}=\langle\mathcal{T}_\alpha,\mathcal{U}_\beta\rangle$, ${\mathcal H}'=\langle\mathcal{T}_\alpha',\mathcal{U}_\beta'\rangle$ and $\phi:\, {\mathcal H}\to{\mathcal H}'$ be an isomorphism. Then the maximal cyclic subgroups $\langle{\mathcal T}_\gamma\rangle$ and $\langle{\mathcal T}_{\gamma'}\rangle$ have the same order $c$, hence they are equal since $\langle\gamma\rangle=\langle\gamma'\rangle$; so ${\mathcal H}= \langle{\mathcal T}_\gamma, {\mathcal U}_\beta\rangle$ and ${\mathcal H}'=\langle\mathcal{T}_{\gamma},\mathcal{U}_{\beta'}\rangle$. 

\begin{lemma}\label{gamma2}
Let $\phi:\,{\mathcal H}\to{\mathcal H}'$ be an isomorphism. Then  $\phi(\langle\mathcal{T}_{\gamma^2}\rangle)=\langle\mathcal{T}_{\gamma^2}\rangle$. Moreover, one of the following holds:
\begin{enumerate}
\item[\rm{i)}] $\phi(\langle\mathcal{T}_{\gamma}\rangle)=\langle\mathcal{T}_{\gamma}\rangle$;
\item[\rm{ii)}] $\gamma^2\in\F_p^*$ and $\beta,\beta'\in\langle\gamma^2\rangle$. 
\end{enumerate} 
\end{lemma}

\begin{proof}
If $p=2$ the group $\langle\mathcal{T}_\alpha,\mathcal{U}_\beta\rangle$ is isomorphic to ${\mathcal S}_3$ and the result is trivial.

Now, let $p\ne2$. In the case when $\phi(\langle\mathcal{T}_{\gamma}\rangle)=\langle\mathcal{T}_{\gamma}\rangle$ the result is clear. Otherwise, $\phi(\mathcal{T}_{\gamma})=\mathcal{T}_{\gamma^i} {\mathcal U}_{\beta'}$ for some $i$, whence $\phi(\mathcal{T}_{\gamma^2})=\mathcal{T}_{\gamma^{i(p+1)}{\beta'}}$. Now, $\gamma^{i(p+1)},{\beta'}\in\F_p^*$ so $\gamma^2\in\F_p^*$ and $\beta,\beta'\in\langle\gamma\rangle\cap\F_p^*=\langle\gamma^2\rangle$. It follows that
$\phi(\langle\mathcal{T}_{\gamma^2}\rangle)\subseteq\langle\mathcal{T}_{\gamma^2}\rangle$ and, having the same order, they must be equal.
\end{proof}

\begin{lemma}\label{phiq=q'}
$\phi(\mathcal{T}_\beta\langle\mathcal{T}_{\gamma^{p+1}}\rangle)=\mathcal{T}_\beta' \langle\mathcal{T}_{\gamma^{p+1}}\rangle.$
\end{lemma}
\begin{proof}
Consider  the sets of squares $\mathcal{H}^2$, $\mathcal{H}'^2$ of $\mathcal{H}$ and $\mathcal{H}'$, respectively. We have $\mathcal{H}^2=\langle\mathcal{T}_{\gamma^2}\rangle\cup \mathcal{T}_\beta \langle\mathcal{T}_{\gamma^{p+1}}\rangle$ and $\mathcal{H}'^2=\langle\mathcal{T}_{\gamma^2}\rangle\cup\mathcal{T}_{\beta'} \langle\mathcal{T}_{\gamma^{p+1}}\rangle$.
Clearly $\phi(\mathcal{H}^2)=\mathcal{H}'^2$ whence, by Lemma \ref{gamma2}, $\phi(\mathcal{H}^2\setminus\langle\mathcal{T}_{\gamma^2}\rangle)=\mathcal{H}'^2\setminus\langle\mathcal{T}_{\gamma^2}\rangle.$

If $\mathcal{T}_\beta\not\in\langle\mathcal{T}_{\gamma^2}\rangle$,  then $\mathcal{H}^2\setminus\langle\mathcal{T}_{\gamma^2}\rangle=\mathcal{T}_\beta\langle\mathcal{T}_{\gamma^{p+1}}\rangle$ and 
$\phi(\mathcal{T}_\beta\langle\mathcal{T}_{\gamma^{p+1}}\rangle)=\mathcal{T}_{\beta'}\langle\mathcal{T}_{\gamma^{p+1}}\rangle$.

If $\mathcal{T}_\beta\in\langle\mathcal{T}_{\gamma^2}\rangle$, then in fact 
$\mathcal{T}_\beta \langle\mathcal{T}_{\gamma^{p+1}}\rangle\subseteq\langle\mathcal{T}_{\gamma^2}\rangle$, and the elements of 
$\mathcal{T}_\beta \langle\mathcal{T}_{\gamma^{p+1}}\rangle$ are characterised as the elements of $\mathcal{H}$ having more than 2 square roots and the same holds in $\mathcal{H'}$, so they correspond to each other under the isomorphism $\phi$.

\end{proof}

The map $\phi$ induces an automorphism $\bar\phi$ of the cyclic group $\langle\mathcal{T}_{\gamma^\epsilon}\rangle/\langle\mathcal{T}_{\gamma^{p+1}}\rangle$, where $\epsilon=1,2$ according to the cases (i) and (ii) of Lemma \ref{gamma2}. Clearly $\mathcal{T}_\beta,\mathcal{T}_{\beta'}\in\langle\mathcal{T}_{\gamma^\epsilon}\rangle$, and, by Lemma \ref{phiq=q'}, $\bar{\phi}(\overline{\mathcal{T}_\beta})=\overline{\mathcal{T}_{\beta'}}$, where $\bar{x}$  denotes the class of $x$ in $\langle\mathcal{T}_{\gamma^\epsilon}\rangle/\langle\mathcal{T}_{\gamma^{p+1}}\rangle$; in particular, $\overline{\mathcal{T}_\beta}$ and $\overline{\mathcal{T}_{\beta'}}$ have the same order. We now note that this order is  either 1 or 2 (in fact, $\beta^{p-1}=1, \beta^{p+1}\in \langle\gamma\rangle^{p+1}$, so  $\beta^2\in \langle\gamma\rangle^{p+1}$, i.e., $\overline{\mathcal{T}_{\beta}}^2=\bar1$).  Since in the cyclic group $\langle\mathcal{T}_{\gamma^\epsilon}\rangle/\langle\mathcal{T}_{\gamma^{p+1}}\rangle$ there is at most an element of order 2 and an element of order 1, then $\overline{\mathcal{T}_\beta}=\overline{\mathcal{T}_{\beta'}}$, so $\beta\langle\gamma\rangle^{p+1}=\beta'\langle\gamma\rangle^{p+1}$ thus proving the proposition.
\end{proof}

We now show that each group $\mathcal{H}= \langle{\mathcal T}_\gamma, {\mathcal U}_\beta\rangle$ is conjugate (via a diagonal matrix) to a subgroup of ${\mathcal H}_0=\langle\mathcal{T}_{\gamma_0},\mathcal{U}_1\rangle$, where $\gamma_0$ is a generator of $\F_{p^2}^\times$. 
Observe that ${\mathcal H}_0=\{T_{\gamma_0^i}, V_i\ |\ i=0,\dots,p^2-1\}$, where 
$$
V_i =\begin{pmatrix} &\gamma_0^i \\  \gamma_0^{ip}& \end{pmatrix}.
$$
Let  $m$ and $b$ be integers such that $\gamma=\gamma_0^m$ and $\beta=\gamma^b=\gamma_0^{mb}$. Since $\beta\in\F_p^\times$, then $p+1|mb$ and we may write $\beta=\gamma_0^{(p+1)j}$ where $j\equiv\frac{mb}{p+1}\pmod{p-1}.$ Let also 
$$
M_j= \begin{pmatrix} 1& \\ & \gamma_0^{jp} \end{pmatrix}.
$$
It is easy to check that $M_j{\mathcal T}_\gamma M_j^{-1}={\mathcal T}_\gamma$ and $M_j{\mathcal U}_\beta M_j^{-1}=V_j$. It follows that the group ${\mathcal H}$ is conjugate to the subgroup $\langle {\mathcal T}_\gamma, V_j\rangle=\{T_{\gamma^i},
 V_{j+mi}\ |\ i=0,\dots, \frac{p^2-1}{(m,p^2-1)}-1\}$  of ${\mathcal H}_0$; we note that this subgroup depends only on the class of $j$ modulo $m$, and since $j$ is determined modulo $p-1$, the subgroup is determined by the class of $j$ modulo $(m,p-1)$. 

Now, if $\mathcal{H}'$ is isomorphic to $\mathcal{H}$ then, by Proposition \ref{isocla}, $\mathcal{H}' = \langle{\mathcal T}_\gamma, {\mathcal U}_{\beta'}\rangle$ with $\beta'\langle\gamma^{p+1}\rangle=\beta\langle\gamma^{p+1}\rangle$, namely 
$$\frac{mb'}{p+1}\equiv\frac{mb}{p+1}\pmod{(m,p-1)},$$
hence  the subgroups of $\mathcal{H}_0$ conjugate to $\mathcal{H}$ and $\mathcal{H}'$ whit this construction are the same. In particular, $\mathcal{H}$ and $\mathcal{H}'$ are conjugate subgroups of $GL_2(\F_{p^2}),$
as required in the proof of  Lemma \ref{semidir}.

We are now ready to count the isomorphism classes of extensions in the case $2\nmid f_K$ with a fixed Galois group. 

Our first step will be to count the pairs $(\alpha, \beta)$ for which $c={\rm ord}(\alpha,\beta)$ and the class of $\beta$ modulo $\langle\gamma^{p+1}\rangle$ are fixed.
The number of pairs $(\alpha, \beta)$ of order $c$ is $\psi(c,p-1)$ and we must distinguish these pairs according to the class of $\beta$. 

\begin{lemma}
For $c\in\NN$ with $c|p^2-1$ and $c\nmid p-1$ let
$$
   \lambda(c,p-1) = \begin{cases}
     1  & \text{if }v_2(c)=0\text{ or }v_2(c)=v_2(p^2-1),\\
    1/2 & \text{if }v_2(p-1)<v_2(c)<v_2(p^2-1),\\
    1/3 & \text{if }0<v_2(c)\leq{}v_2(p-1).
   \end{cases}
$$
Then the number of pairs $(\alpha,\beta)\in\F_{p^2}^\times\times\F_p^\times$ of  order $c$ such that $\beta\in\langle\gamma^{p+1}\rangle$ is $\lambda(c,p-1)\psi(c,p-1)$, while
 the number of pairs such that $\beta\notin\langle\gamma^{p+1}\rangle$ is $(1-\lambda(c,p-1))\psi(c,p-1)$.
\end{lemma}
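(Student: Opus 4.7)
The plan is to fix the cyclic subgroup $C_c=\langle\alpha,\beta\rangle$ of $\F_{p^2}^\times$ and to partition the pairs of order $c$ according to the position of $\beta$ in $C_c$. First I would observe that $N=\langle\gamma^{p+1}\rangle$ is intrinsic: it is the unique subgroup of $C_c$ of order $c'=c/(c,p+1)$, and since its elements are fixed by the Frobenius $x\mapsto x^p$, it is contained in $\F_p^\times$; in particular $N$ is independent of the choice of generator $\gamma$ of $C_c$.

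Next I would express both counts as values of the function $\psi$. A pair $(\alpha,\beta)\in\F_{p^2}^\times\times\F_p^\times$ of order $c$ has $\alpha$ in the unique cyclic subgroup of $\F_{p^2}^\times$ of order $c$ and $\beta$ in the unique cyclic subgroup of $\F_p^\times$ of order $d=(c,p-1)$; hence the total count is the number of elements of order $c$ in $\Z/c\Z\times\Z/d\Z$, that is $\psi(c,d)$. A direct manipulation of the formula of Lemma~\ref{psiab} shows $\psi(c,d)=\psi(c,p-1)$, matching the value stated just before the lemma. Restricting to $\beta\in N$, the cyclic group in which $\beta$ varies becomes $N$ itself, of order $c'$, so the restricted count becomes $\psi(c,c')$. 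The lemma therefore reduces to the identity
$$\psi(c,c')=\lambda(c,p-1)\,\psi(c,p-1).$$

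Since $\psi$ factors as a product over the primes dividing $c$ (via the Chinese Remainder Theorem decomposition of $\Z/c\Z\times\Z/x\Z$), I would attack this identity prime by prime. For every odd prime $\ell\mid c$, the assumption $c\mid p^2-1$ together with the elementary fact that $\ell$ divides at most one of $p-1,p+1$ forces the $\ell$-factor of the ratio $\psi(c,c')/\psi(c,p-1)$ to equal $1$: one checks the two subcases $\ell\mid p-1$ and $\ell\mid p+1$ separately, and in each the $\ell$-adic valuations of $c'$ and $d$ produce matching local factors.

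The only substantive step, and the main obstacle, is the prime $\ell=2$ (with $p$ odd; the case $p=2$ forces $v_2(c)=0$ and the statement is trivial). Writing $a=v_2(c)$, $A=v_2(p-1)$, $B=v_2(p+1)$, one has $v_2(c')=\max(0,a-B)$ and $v_2(d)=\min(a,A)$. Plugging these into the two-case formula for the local factor of $\psi$ at $2$, and splitting on the relative positions of $a$ with respect to $A$ and $B$ while keeping in mind that for odd $p$ exactly one of $A,B$ equals $1$, the three values $1$, $1/2$, $1/3$ of $\lambda(c,p-1)$ come out precisely on the intervals $\{a=0\text{ or }a=A+B\}$, $\{A<a<A+B\}$, $\{0<a\le A\}$ prescribed in the statement. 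The complementary count for $\beta\notin\langle\gamma^{p+1}\rangle$ follows at once as $\psi(c,p-1)-\lambda(c,p-1)\psi(c,p-1)$.
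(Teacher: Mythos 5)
Your proof is correct, and it reaches the stated formula by a genuinely different (though closely related) route. The paper argues directly inside the group: it first observes that $(\langle\gamma\rangle\cap\F_p^\times)/\langle\gamma^{p+1}\rangle$ is a quotient of $2$-Sylow subgroups, hence has order $1$ or $2$, so the condition $\beta\in\langle\gamma^{p+1}\rangle$ is automatic except possibly at the prime $2$; it then enumerates the $2$-components $(x,y)$ of the pairs inside the $2$-Sylow subgroup $\Z/2^{w+z}\Z\times\Z/2^{z}\Z$ (where $2^w\,\|\,p+1$, $2^z\,\|\,p-1$, and one of $w,z$ equals $1$) and reads off the fractions $1$, $1/2$, $1/3$ case by case. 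You instead package both counts as values of $\psi$ --- noting that $\langle\gamma^{p+1}\rangle$ is the subgroup of $\F_p^\times$ of order $c'=c/(c,p+1)$ and is independent of the pair, so the restricted count is exactly $\psi(c,c')$ --- and reduce the lemma to the arithmetic identity $\psi(c,c')=\lambda(c,p-1)\,\psi(c,p-1)$, verified prime by prime using the multiplicativity of $\psi$. Both arguments localize at $2$ for the same underlying reason, namely $(p-1,p+1)\mid 2$, and the $2$-adic case analysis is equivalent in content; your version buys a clean closed form for the restricted count and makes the triviality at odd primes a one-line gcd computation, while the paper's version stays closer to the explicit Sylow structure that is reused in the surrounding discussion. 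I checked the steps you leave as ``one checks'' (the equality $\psi(c,d)=\psi(c,p-1)$, the matching of odd local factors in the two subcases, and the $2$-local ratios $1$, $1/2$, $1/3$ on the three ranges of $a=v_2(c)$), and they all hold.
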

\begin{proof}
Let  ${\mathcal H}=\langle\mathcal{T}_{\alpha},\mathcal{U}_\beta\rangle=\langle\mathcal{T}_{\gamma},\mathcal{U}_\beta\rangle$  with ${\rm ord}(\gamma)=c$.
 Since  $(p-1,p+1)=2$ (or $1$ for $p=2$) we have that
$(\langle\gamma\rangle\cap{}\F_p^\times)/\langle\gamma^{p+1}\rangle$ has order $1$ or $2$, so it is the quotient of  the 2-Sylow subgroups of $\langle\gamma\rangle\cap{}\F_p^\times$ and $\langle\gamma^{p+1}\rangle$. It is easy to check that the order of this quotient is 2 when $0<v_2(c)<v_2(p^2-1)$ and  it is 1 otherwise, namely if $v_2(c)=0$ or $v_2(c)=v_2(p^2-1)$.

When this order is 1, necessarily $\beta\in \langle\gamma^{p+1}\rangle$. 

Consider now the case when the order is 2 (in this case necessarily $p\ne2$): we have that $\beta\in\langle\gamma^{p+1}\rangle$ if and only if $v_2(\ord(\beta))\le v_2(\ord(\gamma^{p+1}))=v_2\left(\frac{c}{(c,p+1)}\right)$, so
the condition $\beta \in\langle\gamma^{p+1}\rangle$ depends only on the 2-component $(x,y)$ of $(\alpha,\beta)$ in   the decomposition of  $\F_{p^2}^\times\times\F_p^\times$ as a direct sum of its $\ell$-Sylow subgroups.

The 2-Sylow of $\F_{p^2}^\times\times\F_p^\times$ is isomorphic to  $\Z/2^{w+z}\Z\times{}\Z/2^z\Z$, where $2^w\|(p+1)$ and $2^z\|(p-1)$. Note that either $z=1$ or $w=1$. Assume
$2^k\|c$ for some $1\leq{}k<w+z$. If $z=1$ and $k=1$, the possible pairs $(x,y)$  are $(1,-1),(-1,-1),(-1,1)$ and the only one with $\beta\in \langle\gamma^{p+1}\rangle$ is $(-1,1),$ giving $\frac13$ of the cases.
If $z=1$ and $k>1$ the possible pairs $(x,y)$ are $(x,\pm1)$ where $\ord(x)=2^k$, and $\beta\in \langle\gamma^{p+1}\rangle$ if and only if $y=1$, giving $\frac12$ of the cases. 
If $w=1$, then $\beta\in \langle\gamma^{p+1}\rangle$ precisely when $(x,y)$ is such that $x$ has order bigger than $y$, and it is easy to verify that this happens $1/3$ of the
times. With our definition of  $\lambda(c,p-1)$ the lemma follows.
\end{proof}

\begin{remark}
{\rm A perhaps more intrinsic characterization of the property $\beta\in\langle\gamma^{p+1}\rangle$ is that it holds if and only if the sequence
$1\rightarrow\langle\gamma\rangle\rightarrow{}{\mathcal H}\rightarrow\frac{\mathcal H}{\langle\gamma\rangle}\rightarrow1$ splits.
}
\end{remark}
To count the isomorphism classes we have to take into account that the pairs $(\alpha,\beta)$ and $(\alpha^p,\beta)$ give the same representation, so the number of  pairs just counted must be divided by 2.  By Proposition \ref{CP2:submods} multiplying
by $(p^{2n}-1)/(p-1)$ we obtain the number of isomorphism classes of extensions having a
particular group.

The total number of classes of extensions for $r=2$, $w=1$ is obtained as
\[
   \frac{p^{2n}-1}{p-1} \frac{1}{2}\left[ (p-1)(p^2-1) - (p-1)^2 \right]
    = \frac{1}{2} p(p-1)(p^{2n}-1),
\]
and the total number of classes of extension having no intermediate extension is again
\[
   \frac{1}{2} p(p-1)(p^{2n}-1) + \frac{p(p-1)(p^{2n}-1)}{2(p+1)} = \frac{p(p^2+p-2)(p^{2n}-1)}{2(p+1)}.
\]

We collect all the results obtained in the following theorem.

\begin{theo}
  Let $K$ be an extension of $\Q_p$ of degree $n$. Let $c$ be an integer dividing $(p^2-1)$ but not
  $(p-1)$, and let $N_c$ be the cyclic subgroup of $\F_{p^2}^\times$ of order $c$. Let $\cG(N_c)$ be the
  number of isomorphism classes of extensions of degree $p^2$ such that the normal closure has group
  isomorphic to $\F_{p^2}^+\rtimes{N_c}$. Then
  \[
    \cG(N_c) = \frac{p^{2n}-1}{p^2-1} \cdot \frac{1}{2}\psi(c,p^{(f_k,2)}-1).
  \]
  Let ${\mathcal H}$ be a non-abelian subgroup of $\F_{p^2}^\times\rtimes\Gal(\F_{p^2}/\F_p)$ not contained in
  $\F_{p^2}^\times$. Let $\cG({\mathcal H})$ be the number of isomorphism classes of extensions of degree $p^2$
  such that the normal closure has Galois group isomorphic to $\F_{p^2}^+\rtimes{}{\mathcal H}$. Put
  $N_c={\mathcal H}\cap{}\F_{p^2}^\times$ and $c=|N_c|$. If $2\mid{}f_K$ then $\cG({\mathcal H})=0$, while if $2\nmid{}f_K$ we
  have
  \[
    \cG({\mathcal H}) = \left\{\begin{array}{cl}
      \lambda(c,p-1)\cdot \frac{p^{2n}-1}{2(p-1)} \cdot \psi(c,p-1) 
       & \text{if $N_c\rightarrow{}{\mathcal H}$ splits,} \\
      \left(1-\lambda(c,p-1)\right) \cdot \frac{p^{2n}-1}{2(p-1)} \cdot \psi(c,p-1) &
       \text{if $N_c\rightarrow{}{\mathcal H}$ does not split.}
    \end{array}\right.
  \]
  The above groups exhaust the Galois groups of normal closures of isomorphism classes of extensions of
  degree $p^2$ having no intermediate extension. The total number $\cK_K$ of isomorphism classes of
  extensions of degree $p^2$ with no intermediate extension is
  \[
    \cK_K = \frac{p(p^2+p-2)(p^{2n}-1)}{2(p+1)}.
    \]
\end{theo}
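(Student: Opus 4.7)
The plan is to assemble the theorem as a tabulation of the case analysis performed in Section~\ref{section5}. By Theorem~\ref{CP2:th1} the count reduces to enumerating the irreducible $H$-submodules $X \subset F^\times/(F^\times)^p$ of dimension $2$, and by Lemma~\ref{semidir} the isomorphism type of the Galois group of the normal closure depends only on $H/\ker(\rho)$, where $\rho$ is the restriction of the $H$-action to $X$. First I would combine Proposition~\ref{IF} (which isolates the submodule $Y = \bigoplus_{i \in \I{0,I_F}} M_i$ as the only part of $F^\times/(F^\times)^p$ carrying non-trivial irreducible representations of dimension $\geq 2$) with Proposition~\ref{CP2:repdim} to conclude that the $2$-dimensional irreducibles correspond to pairs $(\alpha,\beta) \in \F_{p^2}^\times \times \F_{p^2}^\times$ with $\lcm(\tfrac{rw}{(r,f_K)}, r) = 2$, and that they always occur with multiplicity $s \cdot [K:\Q_p]$ in $Y$.

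Next I would split according to the parity of $f_K$, following the case distinction already introduced. When $2 \mid f_K$ one always has $s = 1$, so $H/\ker(\rho_0)$ is the cyclic group $N_c$ acting on $\F_{p^2}^+$ by multiplication, and the conjugacy hypothesis in Lemma~\ref{semidir} is immediate. Counting pairs of order $c$ in $\F_{p^2}^\times \times \F_{p^2}^\times$ by $\psi(c,p^2-1)$, dividing by $2$ for the Frobenius twist $(\alpha^p,\beta^p)$, and multiplying by $(p^{2n}-1)/(p^2-1)$ from Proposition~\ref{CP2:submods} yields $\cG(N_c)$ in this case. When $2 \nmid f_K$, the same recipe with $w = 2$ and $r = 1$ recovers the cyclic contribution with $\psi(c,p-1)$ in place of $\psi(c,p^2-1)$; the two cases unify as $\psi(c, p^{(f_K,2)}-1)$.

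The main work, and the place where the argument is most delicate, is the non-abelian case $r = 2$, $w = 1$, $s = 2$, which occurs only when $2 \nmid f_K$. Here I would invoke Proposition~\ref{isocla} to parametrise the isomorphism classes of $\cH = \langle \mathcal{T}_\alpha, \mathcal{U}_\beta\rangle$ by the order $c$ together with the class of $\beta$ in $\langle\gamma\rangle/\langle\gamma\rangle^{p+1}$, and then the explicit conjugation by a diagonal matrix to verify the hypothesis of Lemma~\ref{semidir}. The counting within a fixed class is handled by the function $\lambda(c,p-1)$, whose values reflect the three cases depending on $v_2(c)$ relative to $v_2(p-1)$ and $v_2(p^2-1)$, and correspond precisely to whether the extension $\langle\gamma\rangle \to \cH$ splits. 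The multiplicity in $\bar Y$ is $2n$ and $J_{(\alpha,\beta)}$ is defined over $\F_p$, so Proposition~\ref{CP2:submods} contributes the factor $(p^{2n}-1)/(p-1)$; the factor $1/2$ again accounts for the pair $(\alpha^p, \beta)$ giving the same module. The expected obstacle is precisely the bookkeeping of when $\beta \in \langle \gamma^{p+1}\rangle$: one must analyse the $2$-Sylow subgroup of $\F_{p^2}^\times \times \F_p^\times$, whose structure depends on whether $2 \| p-1$ or $2 \| p+1$, to extract the ratios $1$, $1/2$, $1/3$ defining $\lambda$.

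Finally I would compute the total $\cK_K$ by summing the cyclic contribution (pairs in $\F_{p^2}^\times \times \F_{p^2}^\times$ but not in $\F_p^\times \times \F_p^\times$ when $2 \mid f_K$, or the analogous set when $2 \nmid f_K$) with, in the odd case, the non-abelian contribution for $r = 2$, $w = 1$. Both parities yield the same closed form
\[
  \cK_K = \frac{p^{2n}-1}{p^2-1} \cdot \frac{1}{2}\bigl[(p^2-1)^2 - (p-1)^2\bigr] = \frac{p(p^2+p-2)(p^{2n}-1)}{2(p+1)},
\]
after combining the two partial sums in the $2 \nmid f_K$ case. The coincidence of the total across the two parities is a sanity check confirming that the case analysis exhausts all $2$-dimensional irreducibles.
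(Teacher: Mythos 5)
Your proposal is correct and follows essentially the same route as the paper: Section~\ref{section5} is precisely this case analysis (reduction via Theorem~\ref{CP2:th1} and Lemma~\ref{semidir}, the parity split on $f_K$, the cyclic counts via $\psi$, the non-abelian case via Proposition~\ref{isocla} and the $2$-Sylow analysis behind $\lambda$, and the multiplicities from Propositions~\ref{CP2:multiplicity} and~\ref{CP2:submods}), with the theorem stated at the end as the summary of those computations. The only nuance worth noting is that the split/non-split dichotomy is exactly the condition $\beta\in\langle\gamma^{p+1}\rangle$, which you correctly identify.
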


In accordance with the data in \cite{jones2006database}, we obtain $4$ classes of extensions of degree $4$ over $\Q_2$ with no intermediate extension, and $30$ of degree $9$ over $\Q_3$.
Finally, we note that each isomorphism class $[L]$ contains exactly $p^2$ extensions; in fact,  the subgroup of $\Gal(\tilde L/K)$ fixing $L$ coincides with its normalizer (because $L/K$ has no intermediate extension) and therefore this normaliser has index $p^2$. It follows that the total number of extensions of  degree $p^2$ with no intermediate extension is
$$ \frac{p^3(p^2+p-2)(p^{2n}-1)}{2(p+1)}.$$

\bibliographystyle{amsalpha_abbr}
\bibliography{biblio}

\end{document}